 \numberwithin{equation}{section}
\theoremstyle{plain}
\newtheorem{thm}{Theorem}[section]
\newtheorem{cor}[thm]{Corollary}
\newtheorem{lem}[thm]{Lemma}
\newtheorem{prop}[thm]{Proposition}
\theoremstyle{definition}
\theoremstyle{remark}
\newtheorem{rem}[thm]{Remark}
\newcommand{\N}{\mathbb{N}}
\newcommand{\R}{\mathbb{R}}
\newcommand{\bp}{\begin{proof}[\ensuremath{\mathbf{Proof}}]}
\newcommand{\bs}{\begin{proof}[\ensuremath{\mathbf{Solution}}]}
\newcommand{\ep}{\end{proof}}
\newcommand{\be}{\begin{equation}}
\newcommand{\ee}{\end{equation}}
\begin{document}

\title{Asymptotic flatness of Morrey extremals}

\author{Ryan Hynd\footnote{Department of Mathematics, University of Pennsylvania.  Partially supported by NSF grant DMS-1554130.}\;  and Francis Seuffert\footnote{Department of Mathematics, University of Pennsylvania.}}

\maketitle 
\begin{abstract} 
We study the limiting behavior as $|x|\rightarrow \infty$ of extremal functions $u$ for Morrey's inequality on $\R^n$.  In particular, we compute the limit of $u(x)$ 
as $|x|\rightarrow \infty$ and show $|x||Du(x)|$ tends to $0$. To this end, we exploit the fact that extremals are uniformly bounded and that they each satisfy a PDE of the form $-\Delta_pu=c(\delta_{x_0}-\delta_{y_0})$ for some $c\in \R$ and distinct $x_0,y_0\in \R^n$.  More generally,  we explain how to quantitatively deduce the asymptotic flatness of bounded $p$-harmonic functions on exterior domains of $\R^n$ for $p>n$.  
\end{abstract}

\section{Introduction}
For each $n\in \N$ and $p>n$, Morrey's inequality asserts that there is a constant $C>0$ such that
\be\label{MorreyIneq}
\sup_{x\ne y}\left\{\frac{|u(x)-u(y)|}{|x-y|^{1-n/p}}\right\}
\le C \left(\int_{\R^n}|Du|^pdx\right)^{1/p}
\ee
for all continuously differentiable functions $u:\R^n\rightarrow \R$. In particular, it provides control on the $1-n/p$ H\"older seminorm of any function whose first partial derivatives belong to $L^p(\R^n)$.  In recent work \cite{HyndSeuf}, we showed that there is a smallest constant $C_*>0$ for which \eqref{MorreyIneq} holds and that there are nonconstant functions for which equality holds in \eqref{MorreyIneq} with $C=C_*$.
We call any such function an {\it extremal}.

\par It turns out that for any nonconstant extremal function $u$, there is a unique pair of distinct points $x_0, y_0\in \R^n$ such that 
\be\label{HolderMax}
\sup_{x\ne y}\left\{\frac{|u(x)-u(y)|}{|x-y|^{1-n/p}}\right\}=\frac{|u(x_0)-u(y_0)|}{|x_0-y_0|^{1-n/p}}.
\ee
Moreover, $u$ satisfies the PDE
\be\label{DipolePDE}
-\Delta_pu=c(\delta_{x_0}-\delta_{y_0})
\ee
in $\R^n$ for some nonzero constant $c$.  Here 
\be
\Delta_pv:=\text{div}(|Dv|^{p-2}Dv)
\ee
is the $p$-Laplacian, and equation \eqref{DipolePDE} is understood to mean
\be
\int_{\R^n}|Du|^{p-2}Du\cdot D\phi dx=c(\phi(x_0)-\phi(y_0))
\ee 
for each $\phi\in C^\infty_c(\R^n)$. 

\par Equation \eqref{DipolePDE} can be used to show that each extremal is bounded and has various symmetry properties. In this note, we will make use of these facts to prove the following theorem. We interpret the existence of limit \eqref{FlatMorrey} below as asserting that extremals are {\it asymptotically flat}. This result was also confirmed by numerical computations as observed in Figure \ref{DipoleFig}.  

\begin{thm}\label{mainThm}
Suppose $n\ge 2$ and that $p>n$. If $u$ is an extremal which satisfies \eqref{HolderMax}, then 
\be\label{FlatMorrey}
\lim_{|x|\rightarrow \infty}u(x)=\frac{1}{2}(u(x_0)+u(y_0))
\ee
and 
\be\label{DecayMorrey}
\lim_{|x|\rightarrow \infty}|x||Du(x)|=0.
\ee
Furthermore, 
\be
r^{p-n}\int_{|x|>r}|Du|^pdx=p\int_{|x|>r}|x|^{p-n}|Du|^{p-2}\left(Du\cdot \frac{x}{|x|}\right)^2dx
\ee
is nonincreasing in $r\in (s,\infty)$ for some $s>0$ and tends to $0$ as $r\rightarrow\infty$. 
\end{thm}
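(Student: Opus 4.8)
The plan is to reduce everything to the behavior of $u$ on the exterior region $\Omega=\{|x|>R_0\}$, where $R_0$ is chosen so large that $x_0,y_0\in B_{R_0}$. On $\Omega$ the function $u$ is a bounded $p$-harmonic function, since the dipole sources in \eqref{DipolePDE} lie inside $B_{R_0}$, and as an extremal it has finite energy $\int_{\R^n}|Du|^p\,dx<\infty$. Two features will drive the argument: the finiteness of the energy, and the vanishing of the net flux $\int_{|x|=r}|Du|^{p-2}(Du\cdot\tfrac{x}{|x|})\,dS=0$ for $r>R_0$, which follows from integrating \eqref{DipolePDE} over $B_r$ and noting that the total charge is $c-c=0$. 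First I would record the local Rellich--Pohozaev identity obtained by testing $\Delta_p u=0$ against the vector field $x\cdot Du$ on an annulus $\{r<|x|<R\}$; after integration by parts this reads $\tfrac{p-n}{p}\int_{r<|x|<R}|Du|^p\,dx=g(r)-g(R)$, where $g(\rho):=\tfrac{\rho}{p}\int_{|x|=\rho}|Du|^p\,dS-\rho\int_{|x|=\rho}|Du|^{p-2}(Du\cdot\tfrac{x}{|x|})^2\,dS$. In particular $g$ is nonincreasing. No decay information is needed for this step.

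The analytic heart of the theorem is to show that $u$ is asymptotically flat in the sense that $\operatorname*{osc}_{\{|x|\ge r\}}u\to0$ as $r\to\infty$, and I would deduce this from the Harnack inequality for nonnegative $p$-harmonic functions. Fixing $r>2R_0$, setting $S=\sup_{\{|x|\ge r\}}u$ and $I=\inf_{\{|x|\ge r\}}u$, and applying Harnack on the compact annulus $K=\{2r\le|x|\le4r\}$ (with scale-invariant constant $C=C(n,p)$) to the two nonnegative $p$-harmonic functions $S-u$ and $u-I$, the two inequalities combine, after addition, into the contraction $\operatorname*{osc}_{K}u\le\theta\,\operatorname*{osc}_{\{|x|\ge r\}}u$ with $\theta=\tfrac{C-1}{C+1}\in(0,1)$. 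A short continuity argument comparing the radial profiles $M(\rho)=\max_{|x|=\rho}u$ and $m(\rho)=\min_{|x|=\rho}u$, together with the maximum principle on $\Omega$, then upgrades this to $\operatorname*{osc}_{\{|x|\ge r\}}u\to0$, so that $u$ converges to a limit $L$ at infinity and annular oscillations vanish. This oscillation decay is the step I expect to be the main obstacle: because $\Omega$ is unbounded the Harnack constant degenerates on it, so the estimate must be localized to annuli and propagated outward, and one must rule out the possibility that $u$ approaches different values along different sequences tending to infinity. Granting it, \eqref{DecayMorrey} follows at once from the interior gradient estimate $|Du(z)|\le\tfrac{C}{|z|}\operatorname*{osc}_{\{|z|/2\le|x|\le2|z|\}}u$, which gives $|z|\,|Du(z)|\le C\,\operatorname*{osc}_{\{|z|/2\le|x|\le2|z|\}}u\to0$.

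To identify the limit $L$ and prove \eqref{FlatMorrey}, I would invoke the reflection symmetry of extremals encoded in \eqref{DipolePDE}: if $T$ is the reflection interchanging $x_0$ and $y_0$, then $-(u\circ T)$ solves the same dipole equation as $u$, whence $u(x)+u(Tx)=u(x_0)+u(y_0)$. Since $|Tx|\to\infty$ as $|x|\to\infty$ and $u\to L$, letting $|x|\to\infty$ yields $2L=u(x_0)+u(y_0)$, which is exactly \eqref{FlatMorrey}.

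Finally, for the monotonicity and decay of $E(r):=r^{p-n}\int_{|x|>r}|Du|^p\,dx$, I would feed \eqref{DecayMorrey} back into the Pohozaev identity. Writing $\varepsilon(r)=\sup_{\{|x|\ge r\}}|x|\,|Du(x)|\to0$, one has $\int_{|x|=R}|Du|^p\,dS\le\omega_{n-1}\varepsilon(R)^p R^{n-1-p}$, so $g(R)\to0$ as $R\to\infty$ (here $n-p<0$ is used). Letting $R\to\infty$ in the local identity gives $g(r)=\tfrac{p-n}{p}\int_{|x|>r}|Du|^p\,dx$, and differentiating $E$ in combination with this relation produces $E'(r)=-p\,r^{p-n}\int_{|x|=r}|Du|^{p-2}(Du\cdot\tfrac{x}{|x|})^2\,dS\le0$; thus $E$ is nonincreasing on $(R_0,\infty)$, and integrating yields the stated identity, so one may take $s=R_0$. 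The same estimate $\int_{|x|>r}|Du|^p\,dx\le\tfrac{\omega_{n-1}}{p-n}\varepsilon(r)^p\,r^{n-p}$ shows $E(r)\le\tfrac{\omega_{n-1}}{p-n}\varepsilon(r)^p\to0$, completing the proof.
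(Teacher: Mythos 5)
Your proposal is correct in outline and, at its analytic core, takes a genuinely different route from the paper; one step, however, needs repair. For the flatness and the decay \eqref{DecayMorrey}, the paper (Proposition \ref{pHarmonicProp}) runs a blow-down: the rescalings $v_t(x)=u(tx)$ converge in $C^{1}_{\text{loc}}(\R^n\setminus\{0\})$ by the Ural'ceva--Lewis--Evans estimates, the Kichenassamy--V\'eron classification makes the limit a bounded solution of $-\Delta_pv_\infty=|\mu|^{p-2}\mu n\omega_n\delta_0$ on all of $\R^n$, and a Caccioppoli-based Liouville theorem (Corollary \ref{LiouvilleProp}) forces $v_\infty$ to be constant. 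You replace this by a Harnack oscillation contraction on compact annuli plus the scale-invariant interior gradient bound $|Du(z)|\le C|z|^{-1}\operatorname*{osc}$, in the spirit of the paper's proof of Theorem \ref{DecayThm} --- but with a real difference: the paper's exterior Harnack inequality (Proposition \ref{HarnackLemma}) passes through Corollary \ref{WeakCompCor} and a comparison lemma whose proof already consumes \eqref{GradientLimitPLap}, whereas your localization to $K=\{2r\le|x|\le 4r\}$ uses only Serrin's interior Harnack, so, once completed, it is independent of the blow-down and delivers the algebraic rate of Theorem \ref{DecayThm} at the same time. Your Pohozaev bookkeeping with the multiplier $x\cdot Du$ is a correct and equivalent repackaging of the paper's identity \eqref{IdentityPLap} (which uses $x\cdot Du+(\tfrac{n}{p}-1)u$; your $g(r)-g(R)$ relation matches what the paper obtains after its boundary terms are sent to infinity), and your derivation of $E'(r)=-pr^{p-n}\int_{|x|=r}|Du|^{p-2}(\partial_ru)^2d\sigma$, of the integral identity, and of $E(r)\to 0$ agrees with Proposition \ref{pHarmonicProp2} and Corollary \ref{CorBoundedInt}. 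Two small notes there: the integration by parts needs $u$ smooth in the exterior, which extremals are (Section 4.3 of \cite{HyndSeuf}) and which the paper assumes explicitly; and the flux-vanishing fact you record is never actually used in your version. Finally, the identification $2L=u(x_0)+u(y_0)$ via $u(x)+u(Tx)=u(x_0)+u(y_0)$ is the same ingredient the paper imports from Proposition 3.4 of \cite{HyndSeuf}; your ``whence'' tacitly uses uniqueness of the constrained minimizer with prescribed values at $x_0,y_0$, which is what that proposition encodes, and letting $|x|\to\infty$ along $x$ and $Tx$ is an equivalent finish to the paper's evaluation on the unbounded hyperplane $\Pi$.

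The genuine gap is the upgrade from $\operatorname*{osc}_{K}u\le\theta\,\operatorname*{osc}_{\{|x|\ge r\}}u$ to $\operatorname*{osc}_{\{|x|\ge r\}}u\to 0$. You cannot invoke ``the maximum principle on $\Omega$'': on the unbounded exterior domain the statement $\sup_{|x|\ge r}u=\sup_{|x|=r}u$ is precisely Corollary \ref{WeakCompCor}, whose proof requires \eqref{GradientLimitPLap} --- the very conclusion you are in the middle of proving --- so, read literally, your sketch is circular here, and naive iteration of the annulus contraction fails because the supremum over $\{|x|\ge 2r\}$ may be approached at infinity rather than on $K$. The repair is exactly the device of Step 2 of the paper's proof of Proposition \ref{pHarmonicProp}: comparison on \emph{bounded} annuli with the radial $p$-harmonic functions $a+b|x|^{\frac{p-n}{p-1}}$ shows $M(\rho)=\max_{|x|=\rho}u$ is quasiconvex and $m(\rho)=\min_{|x|=\rho}u$ is quasiconcave, hence both are eventually monotone with limits $M_\infty$ and $m_\infty$; consequently $\sup_{|x|\ge r}u\to M_\infty$ and $\inf_{|x|\ge r}u\to m_\infty$, and since $\operatorname*{osc}_{\{|x|=3r\}}u\le\operatorname*{osc}_{K}u\le\theta\,\operatorname*{osc}_{\{|x|\ge r\}}u$, letting $r\to\infty$ yields $M_\infty-m_\infty\le\theta(M_\infty-m_\infty)$, forcing $M_\infty=m_\infty$. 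With this substitution your argument closes completely; everything downstream (the gradient estimate, the symmetry identification, and the Pohozaev monotonicity) is sound as you wrote it.
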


\begin{figure}[h]
\centering
 \includegraphics[width=1\columnwidth]{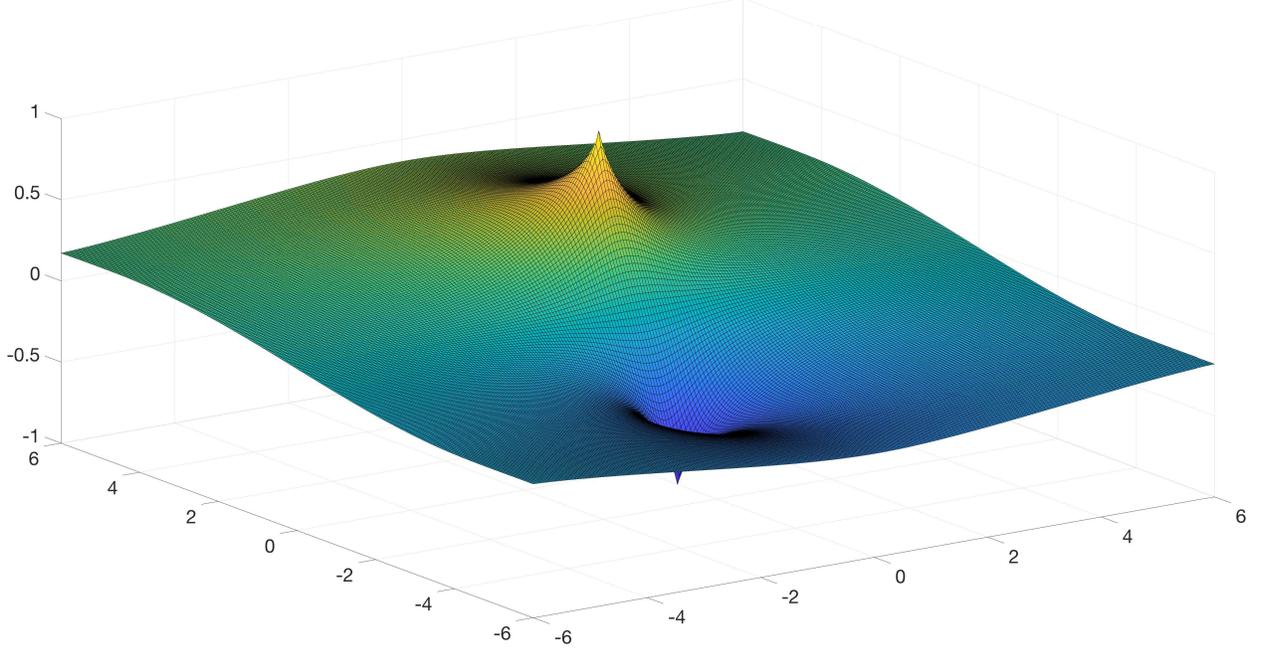}
 \caption{The graph of a numerically approximated extremal $u$ with $n=2, p=4, x_0=(0,1), y_0=(0,-1)$, $u(x_0)=1$ and $u(y_0)=-1$. Note that $u(x)\approx \frac{1}{2}(u(x_0)+u(y_0))=0$ for larger values of $|x|$.}\label{DipoleFig}
\end{figure}

\par In proving Theorem \ref{mainThm}, we will first verify that any bounded $p$-harmonic function $u$ on the exterior domain
$$
\R^n\setminus \overline{B_1}=\{x\in \R^n: |x|>1\}
$$
is asymptotically flat for $p>n\ge 2$. That is, there is some $\beta\in \R$ for which
$$
\beta=\lim_{|x|\rightarrow\infty}u(x).
$$
By employing a Harnack inequality, we can quantify this assertion and show there are positive numbers $A$ and $\alpha$ such that
\be
|u(x)-\beta|\le \frac{A\|u\|_{\infty}}{|x|^\alpha}, \quad |x|\ge 1.
\ee 
In particular, we will be able to conclude that the limit \eqref{FlatMorrey} occurs with an (at least) algebraic rate of convergence. 

\par The precise decay estimate we derive is described as follows. 
\begin{thm}\label{DecayThm}
Suppose $n\ge 2$ and $p>n$. There are positive constants $\alpha>0$ and $A>0$ such that 
$$
\sup\Big\{|u(x)-u(y)|:|x|, |y|\ge r\Big\} \le \frac{A \|u\|_\infty}{r^\alpha}, \quad r\ge 1
$$
for each function $u$ that is bounded and $p$-harmonic in $\R^n\setminus \overline{B_1}$.
\end{thm}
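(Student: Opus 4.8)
The plan is to reduce the oscillation bound to a pointwise decay estimate and then to transform the point at infinity into an interior point, where standard regularity theory applies. Since for $|x|,|y|\ge r$ one has $|u(x)-u(y)|\le |u(x)-\beta|+|u(y)-\beta|$ for any constant $\beta$, it suffices to produce a number $\beta$ with
$$
\sup_{|x|\ge r}|u(x)-\beta|\le \tfrac12 A\|u\|_\infty r^{-\alpha},\qquad r\ge1,
$$
so that the limit $\beta=\lim_{|x|\to\infty}u(x)$ and its rate of convergence are produced simultaneously. That such a limit should exist is suggested by the fact that for $p>n$ the only bounded radial $p$-harmonic functions are constants, the nonconstant radial solutions being affine in the unbounded function $|x|^{(p-n)/(p-1)}$.

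The key device is the inversion $y=x/|x|^2$, which maps the exterior domain $\{|x|>1\}$ onto the punctured ball $B_1\setminus\{0\}$ and sends infinity to the origin. Setting $\tilde u(y)=u(y/|y|^2)$, a direct computation using that the inversion is conformal with factor $|y|^{-2}$ shows that the energy transforms as $\int|Du|^p\,dx=\int |y|^{2(p-n)}|D\tilde u|^p\,dy$; consequently $\tilde u$ is a weak solution of the weighted equation
$$
\mathrm{div}\big(|y|^{\gamma}|D\tilde u|^{p-2}D\tilde u\big)=0,\qquad \gamma=2(p-n)>0,
$$
on $B_1\setminus\{0\}$. The crucial point is that $|y|^{\gamma}$ lies in the Muckenhoupt class governing the weighted $p$-Laplacian: since $n\ge2$ one checks $0<\gamma=2(p-n)<n(p-1)$, so the weight is admissible and the equation enjoys a scale-invariant Harnack inequality together with interior H\"older estimates of De Giorgi--Nash--Moser type.

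I would then remove the puncture. A single point has zero weighted $p$-capacity precisely when $p<n+\gamma=2p-n$, that is exactly when $p>n$; testing with cutoffs $\eta$ supported near $0$ gives $\int|y|^{\gamma}|D\eta|^p\,dy\to0$. Since $\tilde u$ is bounded, the removability theorem for sets of zero capacity lets us extend $\tilde u$ to a weak solution across the origin. Interior H\"older continuity of the weighted equation at $y=0$ then yields, with $\alpha$ and $C$ depending only on $n,p$,
$$
|\tilde u(y)-\tilde u(0)|\le C\|\tilde u\|_{L^\infty}|y|^{\alpha},
$$
the constants being uniform by the scale invariance $\tilde u(\lambda\,\cdot)$ of the equation. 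Setting $\beta=\tilde u(0)$ and rewriting in terms of $x$, where $|y|=1/|x|$ and $\|\tilde u\|_\infty=\|u\|_\infty$, gives the desired pointwise estimate, and combining with the first paragraph completes the proof.

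I expect the main obstacle to be the analysis at the puncture rather than the oscillation bookkeeping: one must verify that $0$ is genuinely removable for bounded solutions of the degenerate equation and that the resulting H\"older estimate has a constant independent of the particular solution. This is also the step that explains why a purely local Harnack argument on the exterior domain will not suffice. Because no bounded radial $p$-harmonic function decays, one cannot build a decaying barrier, nor chain Harnack inequalities out to infinity to pin down the limiting value; it is precisely the inversion that converts this global Liouville phenomenon into a standard interior regularity statement.
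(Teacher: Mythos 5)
Your argument is correct, but it takes a genuinely different route from the one in the paper. You transplant the problem to the punctured ball via the inversion $y=x/|x|^{2}$; the energy identity $\int|Du|^{p}\,dx=\int|y|^{2(p-n)}|D\tilde u|^{p}\,dy$ is right, the weight $|y|^{2(p-n)}$ does lie in the Muckenhoupt class $A_{p}$ (one needs $2(p-n)<n(p-1)$, which holds precisely because $n\ge 2$), and a point has zero weighted $p$-capacity exactly when $p>n$, so the removability theorem and the interior H\"older estimate for $A_{p}$-weighted quasilinear equations (Fabes--Kenig--Serapioni for $p=2$, Heinonen--Kilpel\"ainen--Martio in general) give the stated decay with constants depending only on $n$ and $p$. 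What this buys is a one-stroke proof that does not need the gradient decay $|x||Du(x)|\to 0$ as an input; what it costs is importing the full weighted nonlinear potential theory. The paper instead stays on the exterior domain: using $\lim_{|x|\to\infty}|x||Du(x)|=0$ from Proposition \ref{pHarmonicProp} it justifies an integration by parts at infinity that yields a comparison principle on $\R^n\setminus\overline{B_r}$, hence $\sup_{|x|\ge r}u=\sup_{|x|=r}u$ and likewise for the infimum (Corollary \ref{WeakCompCor}); Serrin's Harnack inequality on a fixed annulus then upgrades to $\sup_{|x|\ge 2r}u\le C\inf_{|x|\ge 2r}u$ for nonnegative bounded solutions, and the classical oscillation iteration $\omega(2r)\le\frac{C-1}{C+1}\omega(r)$ together with Lemma \ref{DecayLem} gives the algebraic rate. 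In particular, the claim in your final paragraph --- that one cannot chain Harnack inequalities out to infinity and that the inversion is essential --- is not correct: the comparison principle is exactly the device that closes the chained Harnack argument, and it is how the paper proceeds. If you write your version up, the two points to document carefully are the ones you already flagged: the removability of the origin for bounded weak solutions of the weighted equation, and the uniformity of the H\"older exponent and constant, both of which do follow from the cited theory since the $A_{p}$ constant of $|y|^{2(p-n)}$ depends only on $n$ and $p$.
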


\par Then we'll show how these results extend to solutions $u:\R^n\rightarrow \R$ of the multipole equation
\be
-\Delta_pu=\sum^N_{i=1}c_i\delta_{x_i},
\ee
where  $x_1,\dots, x_N\in \R^n$ are distinct and $c_1,\dots, c_N\in \R$ satisfy $\sum^N_{i=1}c_i=0$. The main point is to establish that each solution $u$ is bounded. 
Moreover, we will argue that each solution  $u$ is not differentiable at any $x_i$ in which it has a strict local maximum or minimum.  Finally, 
in the appendix, we will explain the numerical method we used to produce Figure \ref{DipoleFig} as shown above.

\section{Bounded $p$-harmonic functions on exterior domains}
In what follows, we will suppose that 
$$
n\ge 2\quad \text{and}\quad p>n
$$
are fixed. Even though we are primarily interested in functions defined on $\R^n$, we will 
also consider functions defined on bounded domains $\Omega$ or possibly on the complement of such 
subsets.  Recall that each function in the Sobolev space 
$W^{1,p}(\Omega)$ has a $1-n/p$ H\"older continuous representative (Theorem 5 section 5.6 of \cite{MR2597943}).  Consequently, we will always identify a $W^{1,p}(\Omega)$ function with its continuous 
representative and consider $W^{1,p}(\Omega)$ as a subset of the continuous functions on $\Omega$.

\par For a given domain $\Omega\subset \R^n$, we will say that $u$ is {\it $p$-harmonic} in $\Omega$  and
write 
$$
-\Delta_pu=0\;\; \text{in}\;\;\Omega
$$
so long as $u\in W^{1,p}_{\text{loc}}(\Omega)$ and
\be
\int_{\Omega}|Du|^{p-2}Du\cdot D\phi dx=0
\ee 
for each $\phi\in C^\infty_c(\Omega)$. 
 Likewise, for a signed Borel measure $\rho$ on $\Omega$, we say that  
$$
-\Delta_pu=\rho\;\; \text{in}\;\;\Omega
$$
provided $u\in W^{1,p}_{\text{loc}}(\Omega)$ and
\be
\int_{\Omega}|Du|^{p-2}Du\cdot D\phi dx=\int_{\Omega}\phi d\rho
\ee 
for all $\phi\in C^\infty_c(\Omega)$. 

\par In this section, we will establish three facts about bounded $p$-harmonic functions on $\R^n\setminus \overline{B_1}$.
We first show that these functions are all asymptotically flat and their gradients tend to zero as $|x|\rightarrow\infty$ at a certain rate. Then we show that if one of these functions lies strictly between two values, its limit as $|x|\rightarrow\infty$ lies strictly between these two values, as well. Finally, we establish decay and monotonicity properties of two integral quantities involving these functions.

\subsection{Asymptotic flatness}
As mentioned above, our first order of business is to verify the asymptotic flatness of bounded $p$-harmonic functions on $\R^n\setminus \overline{B_1}$. This is the central goal of this subsection. We also note that the first part of following statement has essentially been verified by Serrin \cite{MR0186903}, who showed that a positive $p$-harmonic function on an exterior domain has a positive limit as $|x|\rightarrow\infty$ or tends to $\infty$ at a specific rate; this result was also extended recently by Fraas and Pinchover 
\cite{MR2807111, MR2997363}.  Our result is not as general, however our proof is simple and direct.

\begin{prop}\label{pHarmonicProp}
Suppose $u$ is a bounded $p$-harmonic function on $\R^n\setminus \overline{B_1}$.  Then the limit
$$
\lim_{|x|\rightarrow \infty}u(x)
$$
exists and
$$
\lim_{|x|\rightarrow \infty}|x||Du(x)|=0.
$$
\end{prop}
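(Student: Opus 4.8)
The plan is to reduce both assertions to a single oscillation-decay estimate on spheres and then feed that estimate into a scale-invariant interior gradient bound. Write $S_r=\{|x|=r\}$, and for $r\ge 1$ set $\mu(r)=\sup_{S_r}u$ and $\nu(r)=\inf_{S_r}u$, together with the exterior oscillation $\omega(R)=\sup_{|x|\ge R}u-\inf_{|x|\ge R}u$. Since $u$ is bounded, $\omega$ is finite and nonincreasing in $R$; the heart of the matter is to show $\omega(R)\to 0$ as $R\to\infty$, as this simultaneously forces the limit in the first claim to exist and drives the gradient estimate.

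The main analytic input I would use is the Harnack inequality for nonnegative $p$-harmonic functions (Serrin, Trudinger). Covering $S_{2R}$ by finitely many balls of radius comparable to $R$, each contained in the annulus $\{R<|x|<4R\}$, and chaining the local Harnack inequality around $S_{2R}$ yields a scale-invariant estimate $\sup_{S_{2R}}w\le C'\inf_{S_{2R}}w$ for every nonnegative $p$-harmonic $w$ on $\{R<|x|<4R\}$, with $C'=C'(n,p)$ independent of $R$. Applying this to the two nonnegative $p$-harmonic functions $w_1=u-\inf_{|x|\ge R}u$ and $w_2=\sup_{|x|\ge R}u-u$ and adding the resulting inequalities gives, after a short computation,
\[
\mu(2R)-\nu(2R)\le\theta\,\omega(R),\qquad \theta:=\frac{C'-1}{C'+1}\in(0,1).
\]
To pass from this sphere estimate to a statement about $\omega$, I would invoke the weak maximum principle on bounded annuli, which shows that $\mu$ is quasiconvex and $\nu$ quasiconcave on $[1,\infty)$; being bounded, each is eventually monotone and therefore converges, with $\lim_r\mu(r)=\limsup_{|x|\to\infty}u$ and $\lim_r\nu(r)=\liminf_{|x|\to\infty}u$. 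Consequently $\omega(R)$ decreases to $\lim_r\mu(r)-\lim_r\nu(r)$, while the left-hand side above tends to the same quantity; letting $R\to\infty$ and using $\theta<1$ forces $\lim_r\mu(r)=\lim_r\nu(r)$. This proves the limit exists, and iterating the displayed inequality along $R=2^k$ delivers the algebraic rate recorded in Theorem \ref{DecayThm}.

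For the gradient decay I would appeal to the interior gradient estimate for $p$-harmonic functions (Uhlenbeck, Tolksdorf, DiBenedetto, Lewis), namely that $|Du(z)|\le (C/\rho)\,\mathrm{osc}_{B_\rho(z)}u$ whenever $u$ is $p$-harmonic on $B_\rho(z)$, with $C=C(n,p)$. Taking $z=x$ and $\rho=|x|/2$, so that $B_\rho(x)\subset\{|y|\ge|x|/2\}$ for $|x|\ge 2$, gives $|x|\,|Du(x)|\le 2C\,\omega(|x|/2)$, and since $\omega(|x|/2)\to 0$ the desired conclusion $|x||Du(x)|\to 0$ follows at once.

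I expect the main obstacle to be the unboundedness of the exterior domain: the naive maximum principle does not control $u$ at infinity, so the Harnack recursion cannot be closed on a single bounded region. The key to circumventing this is the quasiconvexity of $\mu$ and quasiconcavity of $\nu$, which turn the limsup and liminf at infinity into genuine limits of sphere quantities and let the one-step contraction $\mu(2R)-\nu(2R)\le\theta\,\omega(R)$ propagate to the limit. A secondary technical point is verifying that the Harnack constant obtained by chaining around $S_{2R}$ is genuinely independent of $R$, which I would ensure by rescaling $x\mapsto x/R$ so that every application of the local Harnack inequality takes place on a fixed configuration of balls.
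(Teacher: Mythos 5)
Your argument is correct, but it takes a genuinely different route from the paper's proof of Proposition \ref{pHarmonicProp}. The paper proceeds by a blow-down: it rescales $v_t(x)=u(tx)$, uses $C^{1,\gamma}_{\mathrm{loc}}$ estimates to extract a subsequential limit $v_\infty$ that is $p$-harmonic on $\R^n\setminus\{0\}$, invokes the Kichenassamy--V\'eron classification of isolated singularities to write $-\Delta_p v_\infty=|\mu|^{p-2}\mu\, n\omega_n\delta_0$ with $v_\infty\in W^{1,p}_{\mathrm{loc}}(\R^n)$, and then kills the singularity with a Caccioppoli-based Liouville corollary; the quasiconcavity of $r\mapsto\min_{|x|=r}u$ serves only to upgrade subsequential convergence to a full limit, and the gradient decay falls out of the $C^1$ convergence of the rescalings. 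You instead run a Harnack iteration directly on the oscillation and finish with a scale-invariant interior gradient bound. Two remarks. First, your formulation of the Harnack step --- chaining on the annulus $\{R<|x|<4R\}$ and comparing only the \emph{sphere} extrema $\mu(2R),\nu(2R)$ against the exterior extrema at radius $R$, with eventual monotonicity of $\mu,\nu$ closing the loop --- is exactly what keeps the argument non-circular: the paper's own exterior Harnack inequality (Proposition \ref{HarnackLemma}) rests on the exterior comparison principle, whose proof uses the conclusion $|x||Du(x)|\to0$ of this very proposition, so you could not have quoted that version here. Second, your route delivers the algebraic rate of Theorem \ref{DecayThm} simultaneously and avoids the singularity classification entirely, at the cost of importing the (standard, but not proved in the paper) scale-invariant estimate $|Du(z)|\le (C/\rho)\operatorname{osc}_{B_\rho(z)}u$; the paper's blow-down, by contrast, also produces locally uniform $C^1$ convergence of the rescalings, which it reuses elsewhere. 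Both proofs are complete; yours is the more quantitative and, in this respect, the more self-contained of the two.
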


\par To this end, we will need to make use of a version of Caccioppoli's inequality and a Liouville-type assertion for $p$-harmonic functions 
on punctured domains. 
\begin{lem}
Suppose $\Omega\subset \R^n$ is a domain and $x_0\in \Omega$. Further assume $u$ satisfies 
\be
-\Delta_pu=c\delta_{x_0}
\ee
 in $\Omega$ for some constant $c$. 
Then for each nonnegative
$\zeta\in C^\infty_c(\Omega)$, 
\be\label{CaccIneq}
\int_{\Omega}\zeta^p|Du|^pdx\le p^p\int_{\Omega}|u-u(x_0)|^p|D\zeta|^pdx.
\ee 
\end{lem}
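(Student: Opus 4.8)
The plan is to test the weak form of $-\Delta_pu=c\delta_{x_0}$ against $\phi=\zeta^p(u-u(x_0))$, where the crucial choice of the constant $u(x_0)$ forces the point-mass contribution to vanish. Since $u\in W^{1,p}_{\text{loc}}(\Omega)$ is continuous and $\zeta\in C^\infty_c(\Omega)$, this $\phi$ lies in $W^{1,p}$ with compact support in $\Omega$, so a density argument will let me use it as an admissible test function. The right-hand side of the weak equation then becomes $c\,\phi(x_0)=c\,\zeta(x_0)^p(u(x_0)-u(x_0))=0$, which is the whole point of subtracting $u(x_0)$.

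Computing $D\phi=\zeta^p Du+p\zeta^{p-1}(u-u(x_0))D\zeta$, the identity $\int_\Omega|Du|^{p-2}Du\cdot D\phi\,dx=0$ rearranges to
\be
\int_\Omega\zeta^p|Du|^p\,dx=-p\int_\Omega\zeta^{p-1}(u-u(x_0))|Du|^{p-2}Du\cdot D\zeta\,dx.
\ee
Bounding the right-hand side by absolute values and applying H\"older's inequality with exponents $p/(p-1)$ and $p$ to the product $\big(\zeta^{p-1}|Du|^{p-1}\big)\big(|u-u(x_0)||D\zeta|\big)$ yields
\be
\int_\Omega\zeta^p|Du|^p\,dx\le p\left(\int_\Omega\zeta^p|Du|^p\,dx\right)^{(p-1)/p}\left(\int_\Omega|u-u(x_0)|^p|D\zeta|^p\,dx\right)^{1/p}.
\ee
Because $\zeta$ is compactly supported and $u\in W^{1,p}_{\text{loc}}$, the quantity $\int_\Omega\zeta^p|Du|^p\,dx$ is finite; if it is zero the claim is trivial, and otherwise I divide through by its $(p-1)/p$ power and raise the result to the $p$-th power to obtain \eqref{CaccIneq}.

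The only delicate point is the justification of the test function, namely approximating $\phi=\zeta^p(u-u(x_0))$ by elements of $C^\infty_c(\Omega)$ so that both sides of the weak formulation pass to the limit. Continuity of $u$ is precisely what guarantees the approximants can be arranged so that their values at $x_0$ converge to $u(x_0)-u(x_0)=0$, keeping the right-hand side equal to zero in the limit. Everything else is the classical Caccioppoli computation, with the subtraction of $u(x_0)$ playing the role that subtracting an arbitrary constant plays in the source-free case.
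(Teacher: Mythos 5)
Your proof is correct and follows essentially the same route as the paper's: test the weak formulation against $\phi=\zeta^p(u-u(x_0))$ so that the point-mass term vanishes, then apply H\"older's inequality and absorb the gradient term. The extra care you take with the density argument and the finiteness of $\int_\Omega\zeta^p|Du|^p\,dx$ is a welcome refinement of details the paper leaves implicit, but the argument is the same.
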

\begin{proof}
Observe
$$
\int_{\Omega}|Du|^{p-2}Du\cdot D\phi dx=c\phi(x_0)
$$
for $\phi\in W^{1,p}_0(\Omega)$. Let $\phi=\zeta^p(u-u(x_0))$ and note $\phi(x_0)=0$ and
$$
D\phi=p\zeta^{p-1}D\zeta\; (u-u(x_0)) +\zeta^p Du. 
$$
Substituting this test function above gives 
\begin{align*}
\int_{\Omega}\zeta^p|Du|^pdx&=-p\int_{\Omega}\zeta^{p-1}|Du|^{p-2}Du\cdot (u-u(x_0))D\zeta dx\\
&\le p\int_{\Omega}(\zeta|Du|)^{p-1}(|u-u(x_0)| |D\zeta|) dx\\
&\le p\left(\int_{\Omega}\zeta^p|Du|^pdx\right)^{1-1/p}\left(\int_\Omega |u-u(x_0)|^p |D\zeta|^pdx\right)^{1/p}
\end{align*}
which is \eqref{CaccIneq}. 
\end{proof}

\begin{cor}
Suppose $\Omega$ is a domain and $B_{2r}(x_0)\subset\Omega$. Further assume $u$ satisfies 
\be
-\Delta_pu=c\delta_{x_0}
\ee
 in $\Omega$ for some constant $c$. Then 
\be\label{CaccIneqLoc}
\int_{B_r(x_0)}|Du|^pdx\le \left(\frac{2p}{r}\right)^p\int_{B_{2r}(x_0)}|u-u(x_0)|^pdx.
\ee 
\end{cor}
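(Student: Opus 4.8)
The plan is to deduce \eqref{CaccIneqLoc} directly from the preceding lemma by inserting a suitably chosen cutoff function $\zeta$ into \eqref{CaccIneq}. The natural candidate is a radial cutoff that equals $1$ on $B_r(x_0)$, vanishes outside $B_{2r}(x_0)$, and whose gradient is controlled by $2/r$ on the transition annulus.

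Concretely, I would first fix a smooth profile $\eta:[0,\infty)\to[0,1]$ with $\eta\equiv 1$ on $[0,1]$, $\eta$ supported in $[0,2)$, and $|\eta'|\le 2$; such a function exists because the transition from $1$ down to $0$ takes place over an interval of length just under $1$, so its mean slope is approximately $1$ and can be smoothed while keeping the slope at most $2$. I would then set $\zeta(x):=\eta(|x-x_0|/r)$. Since $\eta$ is constant near $0$, this composition is smooth across $x_0$, so $\zeta\in C^\infty_c(\Omega)$ with support inside $B_{2r}(x_0)\subset\Omega$, and by the chain rule $|D\zeta(x)|=|\eta'(|x-x_0|/r)|/r\le 2/r$ everywhere.

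With this choice, the left-hand side of \eqref{CaccIneq} dominates the desired quantity: since $\zeta\ge 0$ and $\zeta\equiv 1$ on $B_r(x_0)$, the integrand $\zeta^p|Du|^p$ is at least $|Du|^p$ on $B_r(x_0)$, giving $\int_{B_r(x_0)}|Du|^p\,dx\le\int_\Omega\zeta^p|Du|^p\,dx$. For the right-hand side, the gradient bound together with the fact that $D\zeta$ is supported in $B_{2r}(x_0)$ yields $p^p\int_\Omega|u-u(x_0)|^p|D\zeta|^p\,dx\le (2p/r)^p\int_{B_{2r}(x_0)}|u-u(x_0)|^p\,dx$. Combining these two bounds through \eqref{CaccIneq} gives \eqref{CaccIneqLoc} at once.

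The argument is essentially mechanical, so there is no serious obstacle; the only point requiring care is the construction of the cutoff, namely ensuring simultaneously that $\zeta$ is genuinely smooth and compactly supported inside $\Omega$ while its gradient obeys the bound $|D\zeta|\le 2/r$ that produces the stated constant $(2p/r)^p$. If one is uneasy about the closure $\overline{B_{2r}(x_0)}$ meeting $\partial\Omega$, I would simply apply the estimate on $B_{(2-\epsilon)r}(x_0)$ and let $\epsilon\downarrow 0$, or equivalently keep the support of $\eta$ strictly inside $[0,2)$ as above.
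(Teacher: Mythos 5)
Your proposal is correct and follows essentially the same route as the paper: choose a cutoff $\zeta$ that is $1$ on $B_r(x_0)$, supported in $B_{2r}(x_0)$, with $\|D\zeta\|_\infty\le 2/r$, and substitute it into \eqref{CaccIneq}. The paper builds $\zeta$ by rescaling a fixed $\varphi\in C^\infty_c(B_2(0))$ rather than a radial profile, but this is an inessential difference.
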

\begin{proof}
Choose $\varphi\in C^\infty_c(B_2(0))$ with $0\le \varphi\le 1$, $\varphi\equiv 1$ in $B_1(0)$ and 
$$
\|D\varphi\|_\infty\le 2. 
$$
Then set
$$
\zeta(x)=\varphi\left(\frac{x-x_0}{r}\right), \quad x\in B_{2r}(x_0).
$$
Clearly, $\zeta\in C^\infty_c(B_{2r}(x_0))$ is nonnegative, $\zeta\equiv 1$ in $B_r(x_0)$ and 
$$
\|D\zeta\|_\infty\le \frac{2}{r}. 
$$
The conclusion follows from substituting this $\zeta$ in \eqref{CaccIneq}.
\end{proof}
\begin{cor}\label{LiouvilleProp}
Suppose $u$ is bounded and satisfies 
\be
-\Delta_pu=c\delta_{x_0}
\ee
in $\R^n$ for some constant $c$. Then $u$ is necessarily constant and $c=0$.  
\end{cor}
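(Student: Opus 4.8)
The plan is to feed the boundedness of $u$ directly into the localized Caccioppoli estimate \eqref{CaccIneqLoc} and then let the radius grow. Taking $\Omega=\R^n$ and an arbitrary radius $r>0$, I would estimate the right-hand side of \eqref{CaccIneqLoc} crudely by replacing $|u-u(x_0)|$ with its uniform bound $2\|u\|_\infty$ and using that the volume of $B_{2r}(x_0)$ is a constant multiple of $r^n$. This yields a bound of the form
\be
\int_{B_r(x_0)}|Du|^pdx\le \left(\frac{2p}{r}\right)^p(2\|u\|_\infty)^p\,|B_{2r}(x_0)|=C\,r^{n-p},
\ee
where $C$ depends only on $n$, $p$, and $\|u\|_\infty$.

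The decisive observation is that the standing hypothesis $p>n$ makes the exponent $n-p$ strictly negative, so the right-hand side tends to $0$ as $r\rightarrow\infty$. Since the left-hand side is nondecreasing in $r$ and converges to $\int_{\R^n}|Du|^pdx$ by monotone convergence, I would conclude that $\int_{\R^n}|Du|^pdx=0$. Hence $Du=0$ almost everywhere, and because $\R^n$ is connected this forces $u$ to be constant.

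To finish, I would recover $c=0$ by substituting the now-constant function $u$ back into its defining weak identity $\int_{\R^n}|Du|^{p-2}Du\cdot D\phi\,dx=c\,\phi(x_0)$, valid for all $\phi\in C^\infty_c(\R^n)$. Since $Du\equiv 0$, the left-hand side vanishes identically, leaving $c\,\phi(x_0)=0$; choosing any test function with $\phi(x_0)\ne 0$ gives $c=0$. Each step is an elementary estimate, so I do not anticipate a serious obstacle; the only point requiring care is the role of $p>n$, which is precisely what guarantees that the energy on $B_r(x_0)$ decays rather than grows as $r\rightarrow\infty$.
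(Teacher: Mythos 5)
Your proposal is correct and follows essentially the same route as the paper: apply the localized Caccioppoli estimate \eqref{CaccIneqLoc}, bound the right-hand side by a constant times $r^{n-p}$ using the uniform bound on $u$, and let $r\rightarrow\infty$ using $p>n$ to force $Du\equiv 0$. Your explicit final step recovering $c=0$ from the weak formulation is left implicit in the paper but is exactly the intended argument.
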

\begin{proof}
In view of \eqref{CaccIneqLoc}, 
\begin{align*}
\int_{B_r(x_0)}|Du|^pdx&\le \left(\frac{2p}{r}\right)^p\int_{B_{2r}(x_0)}|u-u(x_0)|^pdx\\ 
&\le \left(\frac{2p}{r}\right)^p(2\|u\|_\infty)^p\omega_n (2r)^n\\
&\le \frac{(4p\|u\|_\infty)^p\omega_n2^n}{r^{p-n}}
\end{align*}
for each $r>0$; here $\omega_n$ is the Lebesgue measure of $B_1$. Sending $r\rightarrow\infty$ forces $|Du|$ to vanish on $\R^n$.  
\end{proof}

We are now ready to employ these observations to fashion a proof of Proposition \ref{pHarmonicProp}.
\begin{proof}[Proof of Proposition \ref{pHarmonicProp}] 1. For $t>0$, set
\be
v_t(x):=u(tx), \quad x\in \R^n. 
\ee
Note that $v_t$ is $p$-harmonic on $\R^n\setminus \overline{B_{1/t}}$. Without loss of generality,  suppose $|u(y)|\le 1$ for all $|y|>1$, so that
$$
|v_t(x)|\le 1
$$
for $|x|>1/t$.   We will now proceed to send $t\rightarrow \infty$.

\par By a result of Ural'ceva \cite{Ural} (see also Lewis \cite{Lew} and Evans \cite{Evans}), there is $\gamma\in (0,1)$ depending on $p$ and $n$ such that 
$$
\|v_t\|_{C^{1,\gamma}(K)}\le A
$$
for each compact $K\subset \R^n\setminus\{0\}$ and $t$ sufficiently large. Here $A$ depends on $p$ and $n$ and $K$. Consequently, there is a sequence  $(v_{t_k})_{k\in \N}$ with $t_k\rightarrow\infty$ 
and $v_\infty\in C^{1}_{\text{loc}}(\R^n\setminus\{0\})$ such that  
\be
v_{t_k}\rightarrow v_\infty \;\;\text{in}\;\; C^{1}(K)
\ee
for each compact $K\subset \R^n\setminus\{0\}$. It follows easily that $v_\infty$ is $p$-harmonic on 
$\R^n\setminus\{0\}$. 

\par By Theorem 1.1 and Remark 1.6 of \cite{KicVer} (see also \cite{MR886428}), there is a constant $\mu\in \R$ such that 
\be\label{vinftyEquation}
-\Delta_p v_\infty=|\mu|^{p-2}\mu n\omega_n\delta_0
\ee
in $\R^n$. Moreover, 
$$
\lim_{|x|\rightarrow 0}\frac{|Dv_\infty(x)|}{|x|^{\left(\frac{p-n}{p-1}\right)-1}}=|\mu|.
$$
This limit gives that $|Dv_\infty|^p$ is locally integrable in a neighborhood of $0$. Since 
$$
|v_\infty(x)|\le 1
$$
for all $x\in \R^n$, we have $v_\infty\in W^{1,p}_{\text{loc}}(\R^n)$.  Corollary \ref{LiouvilleProp} then implies that $v_\infty$ is identically equal to a constant $\beta$ and so 
$$
\lim_{k\rightarrow\infty}v_{t_k}(x)=\beta
$$
locally uniformly on $\R^n\setminus\{0\}$.

\par 2. Consider 
$$
m(t):=\min_{|y|=t}u(y)
$$
for $t>1$. By the comparison principle for $p$-harmonic functions,  
$$
u(z)\ge \min\{m(t),m(s)\}
$$
for $1<s< |z|<t$. It follows that 
$$
m(\lambda t+(1-\lambda)s)\ge \min\{m(t),m(s)\}
$$
for $\lambda\in [0,1]$. In particular, $m: (1,\infty)\rightarrow [-1,1]$ is quasiconcave. So there is $r_1\ge 1$ for 
which $m|_{(r_1,\infty)}$ is monotone  (Theorem 17 in Chapter 3 of \cite{MR0496692}) and thus 
$$
\lim_{t\rightarrow \infty}m(t)=\lim_{t\rightarrow \infty}\min_{|y|=t}u(y)=\lim_{t\rightarrow \infty}\min_{|x|=1}v_t(x)
$$
exists. 

\par We can choose an $x_t\in \R^n$ with $|x_t|=1$ so that 
$$
\min_{|x|=1}v_t(x)=v_t(x_t).
$$
We may as well also suppose that $(x_{t_k})_{k\in \N}$ is convergent. In this 
case, 
$$
\lim_{t\rightarrow \infty}\min_{|x|=1}v_t(x)=\lim_{k\rightarrow \infty}\min_{|x|=1}v_{t_k}(x)=\lim_{k\rightarrow \infty}v_{t_k}(x_{t_k})=\beta.
$$
With virtually the same argument, we find 
$$
\lim_{t\rightarrow \infty}\max_{|x|=1}v_t(x)=\beta.
$$
Consequently, 
$$
\lim_{t\rightarrow\infty}v_{t}(x)=\beta
$$
uniformly for $|x|=1$.

\par 3. Now let $(y_k)_{k\in \N}\subset \R^n$ be a sequence such that $|y_k|\rightarrow\infty$. Without loss of generality, we will suppose $|y_k|>0$ and that $(y_k/|y_k|)_{k\in \N}$ is convergent as these properties are true for a subsequence of $(y_k)_{k\in \N}$.  Then 
$$
\lim_{k\rightarrow\infty}u(y_k)=\lim_{k\rightarrow\infty}u\left(|y_k|\frac{y_k}{|y_k|}\right)
=\lim_{k\rightarrow\infty}v_{|y_k|}\left(\frac{y_k}{|y_k|}\right)=\beta,
$$
and we conclude that 
$$
\lim_{|y|\rightarrow \infty}u(y)=\beta.
$$

\par We also have that 
$$
Dv_t(x)=Du(tx)t
$$
tends to $0\in \R^n$ uniformly for $|x|=1$. Choosing $(y_k)_{k\in \N}$ as above, we find
\begin{align*}
\lim_{k\rightarrow \infty}|y_k||Du(y_k)|&=\lim_{k\rightarrow \infty}|y_k|\left|Du\left(|y_k|\frac{y_k}{|y_k|}\right)\right|\\
&=\lim_{k\rightarrow \infty}\left|Dv_{|y_k|}\left(\frac{y_k}{|y_k|}\right)\right|\\
&=0.
\end{align*}
That is, 
$$
\lim_{|y|\rightarrow \infty}|y||Du(y)|=0.
$$
\end{proof}
\begin{rem}
This theorem can be proved without appealing to the $C^{1,\gamma}_{\text{loc}}$ estimates for $p$-harmonic functions.  Local uniform convergence of a subsequence of $(v_t)_{t>0}$ in $\R^n\setminus\{0\}$ would follow from Morrey's inequality, and convergence in $W^{1,p}_{\text{loc}}(\R^n\setminus\{0\})$ can be verified using the Browder and Minty method (as described in section 9.1 of \cite{MR2597943}).
\end{rem}
\begin{rem}
In Corollary \ref{WeakCompCor} below, we will show that $\min_{|x|=r}u(x)$ is nondecreasing and $\max_{|x|=r}u(x)$ is nonincreasing for all $r\in (1,\infty)$.  
\end{rem}

\subsection{Strict bounds on limiting values}
The next assertion states that the limit of a bounded $p$-harmonic function on $\R^n\setminus \overline{B_1}$
always lies strictly within the bounds observed by the function. In particular, any bounded and positive $p$-harmonic function on an exterior domain has a positive limit.  Pinchover and Tintarev \cite{MR2404043} established this conclusion using a different argument and for more general operators. 
\begin{prop}\label{LimitPosPreserving}
Suppose  $u$ is $p$-harmonic in $\R^n\setminus \overline{B_1}$ and 
$$
a< u(x)<b, \quad x\in \R^n\setminus \overline{B_1}
$$
for some $a,b\in \R$. Then
\be\label{betaInside}
a<\lim_{|x|\rightarrow \infty}u(x)<b.
\ee
\end{prop}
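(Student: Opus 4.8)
The plan is to reduce both strict inequalities in \eqref{betaInside} to a single statement about positive $p$-harmonic functions and then establish that statement with an explicit radial barrier. By Proposition \ref{pHarmonicProp} the limit $\beta:=\lim_{|x|\to\infty}u(x)$ exists, and since $a<u<b$ we automatically have $a\le\beta\le b$; the work is to rule out the endpoints. First I would observe that both $w:=u-a$ and $\tilde w:=b-u$ are $p$-harmonic on $\R^n\setminus\overline{B_1}$ (the $p$-Laplacian is unaffected by adding a constant and is odd, since $|D(b-u)|^{p-2}D(b-u)=-|Du|^{p-2}Du$), that both are strictly positive there, and that their limits at infinity are $\beta-a$ and $b-\beta$. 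Thus it suffices to prove: if $w$ is positive and $p$-harmonic on $\R^n\setminus\overline{B_1}$ with $\lim_{|x|\to\infty}w(x)=L$, then $L>0$.

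To prove this claim I would build a radial comparison function. The radial $p$-harmonic functions on $\R^n\setminus\{0\}$ are exactly those of the form $A+B|x|^\kappa$ with $\kappa:=(p-n)/(p-1)>0$; one verifies $\mathrm{div}(|Dh|^{p-2}Dh)=0$ for $h(x)=|x|^\kappa$ using the identity $(\kappa-1)(p-1)=1-n$. Fix any $s>1$ and set $\mu:=\min_{|x|=s}w>0$, which is positive and attained because $w$ is continuous and strictly positive on the compact sphere. For each $R>s$ define
\[
\phi_R(x):=\mu\,\frac{R^\kappa-|x|^\kappa}{R^\kappa-s^\kappa},
\]
which is radial $p$-harmonic on the annulus $\{s<|x|<R\}$, equals $\mu$ on $|x|=s$, and equals $0$ on $|x|=R$.

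The key step is the comparison: on the boundary of the annulus we have $w\ge\mu=\phi_R$ on $|x|=s$ and $w>0=\phi_R$ on $|x|=R$, so the comparison principle for $p$-harmonic functions on the bounded domain $\{s<|x|<R\}$ gives $w\ge\phi_R$ throughout the annulus. Now I would fix $x$ with $|x|>s$ and let $R\to\infty$; since $\kappa>0$ we get $\phi_R(x)\to\mu$, whence $w(x)\ge\mu$ for every $|x|>s$. Letting $|x|\to\infty$ yields $L\ge\mu>0$, which proves the claim and hence \eqref{betaInside}. I expect the main (though mild) obstacle to be pinning down the explicit radial profile and confirming the direction and hypotheses of the comparison principle; everything after the comparison step is a routine limit. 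A quicker alternative, once the monotonicity from Corollary \ref{WeakCompCor} is available, is to note that $r\mapsto\min_{|x|=r}u(x)$ is nondecreasing and strictly exceeds $a$ at each fixed radius, forcing $\beta>a$ (and symmetrically $\beta<b$); but the barrier argument above has the advantage of being self-contained.
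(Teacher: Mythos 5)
Your proof is correct and follows essentially the same route as the paper: the same radial barrier $\bigl(R^{\kappa}-|x|^{\kappa}\bigr)/\bigl(R^{\kappa}-s^{\kappa}\bigr)$ with $\kappa=(p-n)/(p-1)$, compared against $u-a$ (resp. $b-u$) on an annulus and then sent to the limit $R\to\infty$. The only difference is cosmetic: you fix $x$ and let $R\to\infty$ to get the pointwise bound $u-a\ge\mu$ on $|x|>s$, while the paper evaluates the barrier at the moving point $\tfrac{R}{2}e_1$; both yield the strict inequality.
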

\begin{proof}
Fix $r>1$, and for $R>r$ define
$$
w_R(x)=\frac{R^{\frac{p-n}{p-1}}-|x|^{\frac{p-n}{p-1}}}{R^{\frac{p-n}{p-1}}-r^{\frac{p-n}{p-1}}}, \quad r\le |x|\le R.
$$
Note that $w_R$ is $p$-harmonic in the annulus $B_R\setminus{\overline{B_{r}}}$,
$$
w_R|_{\partial B_{r}}=1\quad \text{and}\quad w_R|_{\partial B_R}=0.
$$
Now choose $\delta>0$ such that 
$$
\min_{x\in\partial B_{r}}u(x)-a\ge \delta. 
$$
By comparison, 
\be
u(x)-a\ge \delta w_R(x), \quad \quad r\le |x|\le R.
\ee
\par Let $e_1=(1,0,\dots, 0)$ and suppose $R>2r$. Then $r<\frac{1}{2}R<R$ and so
\begin{align}
u\left(\frac{R}{2}e_1\right)&\ge a+\delta w_R\left(\frac{R}{2}e_1\right)\\
&= a+\delta \frac{R^{\frac{p-n}{p-1}}-(R/2)^{\frac{p-n}{p-1}}}{R^{\frac{p-n}{p-1}}-r^{\frac{p-n}{p-1}}}\\
&= a+\delta \frac{1-(1/2)^{\frac{p-n}{p-1}}}{1-(r/R)^{\frac{p-n}{p-1}}}.
\end{align}
As a result, 
$$
\lim_{|x|\rightarrow \infty}u(x)=\lim_{R\rightarrow \infty}u\left(\frac{R}{2}e_1\right)\ge 
a+\delta \left(1-(1/2)^{\frac{p-n}{p-1}}\right)>a.
$$
Likewise, we find $\lim_{|x|\rightarrow \infty}u(x)<b$.
\end{proof}
\begin{rem}
We will see in Corollary \ref{WeakCompCor}, that the same conclusion holds only assuming 
$$
a<u(x)<b,\quad |x|=r
$$ 
for some $r>1$. This improvement relies on a global comparison property of bounded $p$-harmonic functions on the exterior domain $\R^n\setminus{\overline{B_1}}$. 
\end{rem}

\subsection{Integral decay and monotonicity}
In Proposition \ref{pHarmonicProp}, we showed that if $u$ is a bounded $p$-harmonic function in $\R^n\setminus \overline{B_1}$, then 
\be\label{GradientLimitPLap}
\lim_{|x|\rightarrow \infty}|x||Du(x)|=0.
\ee
This limit immediately implies the following decay property.

\begin{cor}\label{CorBoundedInt}
Suppose  $u$ is bounded and $p$-harmonic in $\R^n\setminus \overline{B_1}$. 
Then 
\be
\int_{|x|>s}|Du|^pdx<\infty
\ee
for any $s>1$. Moreover, 
\be\label{pHarmonicDecay}
\lim_{r\rightarrow\infty}r^{p-n}\int_{|x|>r}|Du|^pdx=0.
\ee
\end{cor}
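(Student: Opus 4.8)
The plan is to extract everything from the pointwise gradient decay \eqref{GradientLimitPLap}. Since $\lim_{|x|\rightarrow\infty}|x||Du(x)|=0$, for each $\varepsilon>0$ there is an $R>s$ such that $|Du(x)|\le \varepsilon/|x|$ whenever $|x|\ge R$. Raising this bound to the $p$-th power and integrating the tail in polar coordinates gives
\[
\int_{|x|>R}|Du|^p\,dx\le \varepsilon^p\int_{|x|>R}|x|^{-p}\,dx=\varepsilon^p\, n\omega_n\int_R^\infty \rho^{\,n-1-p}\,d\rho.
\]
Because $p>n$, the exponent satisfies $n-1-p<-1$, so the radial integral converges and we obtain
\[
\int_{|x|>R}|Du|^p\,dx\le \frac{\varepsilon^p\, n\omega_n}{p-n}\,R^{\,n-p}<\infty.
\]
This is the one place where the hypothesis $p>n$ is essential.

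For the first assertion I would split $\int_{|x|>s}|Du|^p\,dx$ into the piece over the closed annulus $\{s\le |x|\le R\}$ and the piece over $\{|x|>R\}$. The former is finite because $u$ is $p$-harmonic and hence lies in $W^{1,p}_{\text{loc}}(\R^n\setminus\overline{B_1})$, and the annulus is a compact subset of this domain (here $s>1$ is used); the latter is finite by the estimate just derived. Therefore $\int_{|x|>s}|Du|^p\,dx<\infty$ for every $s>1$.

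For the decay rate \eqref{pHarmonicDecay}, I would fix $\varepsilon>0$ together with the corresponding threshold $R$ as above and apply the same tail estimate with an arbitrary $r>R$ in place of $R$, which yields
\[
r^{p-n}\int_{|x|>r}|Du|^p\,dx\le \frac{\varepsilon^p\, n\omega_n}{p-n}.
\]
Passing to the limit superior as $r\rightarrow\infty$ gives $\limsup_{r\rightarrow\infty}r^{p-n}\int_{|x|>r}|Du|^p\,dx\le \varepsilon^p n\omega_n/(p-n)$, and since $\varepsilon>0$ is arbitrary the limit equals $0$.

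I do not anticipate a serious obstacle: the whole argument reduces to the pointwise bound \eqref{GradientLimitPLap} and the convergence of $\int^\infty\rho^{\,n-1-p}\,d\rho$. The only point demanding care is the order of quantifiers in the final limit, since the inequality holds only for $r$ beyond the $\varepsilon$-dependent threshold $R$; this forces the conclusion to be phrased through $\limsup$ before letting $\varepsilon\to 0$. A second, minor point is the finiteness of the integral over the inner annulus, which is immediate from the membership $u\in W^{1,p}_{\text{loc}}(\R^n\setminus\overline{B_1})$.
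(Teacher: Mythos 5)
Your proposal is correct and follows essentially the same route as the paper: both derive the pointwise bound $|Du(x)|\le \varepsilon/|x|$ from \eqref{GradientLimitPLap}, integrate in polar coordinates using $p>n$, handle the inner annulus via local integrability, and let $\varepsilon\to 0$ for the decay claim. Your explicit use of $\limsup$ to manage the order of quantifiers is a slight refinement of the paper's presentation but not a different argument.
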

\begin{proof}
Fix $\epsilon>0$. By \eqref{GradientLimitPLap}, there is $r>s$ so large that 
$$
|Du(x)|\le \frac{\epsilon}{|x|}
$$
for $|x|\ge r$. Then
\begin{align*}
\int_{|x|>r}|Du|^pdx&\le\epsilon^p n\omega_n\int^\infty_r\tau^{-p}\tau^{n-1}d\tau=\epsilon^p\frac{n\omega_n}{(p-n)r^{p-n}}.
\end{align*}
Since 
$$
\int_{s<|x|<r}|Du|^pdx<\infty,
$$
the first assertion follows.  As for the second claim,
$$
\lim_{r\rightarrow\infty}r^{p-n}\int_{|x|>r}|Du|^pdx\le \epsilon^p\frac{n\omega_n}{(p-n)}.
$$
The conclusion follows as $\epsilon>0$ is arbitrary. 
\end{proof}

Using a certain identity for smooth $p$-harmonic functions, we can strengthen the conclusion of the previous corollary. 
\begin{prop}\label{pHarmonicProp2}
Suppose  $u$ is smooth, bounded and $p$-harmonic in $\R^n\setminus \overline{B_1}$. 
Then 
\be\label{pHarmonicDecay2}
(1,\infty)\ni r\mapsto r^{p-n}\int_{|x|>r}|Du|^pdx
\ee
is nonincreasing. In particular, 
\be\label{MonConpLapInt}
\lim_{r\rightarrow\infty}r^{p-n}\int_{|x|>r}|Du|^pdx=\inf_{r>1}r^{p-n}\int_{|x|>r}|Du|^pdx=0.
\ee
Moreover, 
\be\label{MonConpLapInt2}
r^{p-n}\int_{|x|>r}|Du|^pdx=p\int_{|x|>r}|x|^{p-n}|Du|^{p-2}\left(Du\cdot \frac{x}{|x|}\right)^2dx
\ee
for each $r>1$.
\end{prop}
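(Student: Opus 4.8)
The whole proposition will follow from a single Rellich--Pohozaev type identity. The plan is to first establish the pointwise identity
\be
\text{div}\left(|Du|^{p-2}(Du\cdot x)\,Du-\tfrac{1}{p}|Du|^p\,x\right)=\frac{p-n}{p}\,|Du|^p,
\ee
valid wherever $u$ is smooth and $p$-harmonic. To check it, I would expand the divergence, write $A:=|Du|^{p-2}Du$ so that $\text{div}\,A=0$ by $p$-harmonicity, and note that the term in which the derivatives land on $A$ drops out. In the surviving terms the identities $\sum_i A_iu_{ji}=\tfrac1p\,\partial_j(|Du|^p)$ and $\sum_iA_iu_i=|Du|^p$ cause the two radial-transport contributions $\tfrac1p\,x\cdot D(|Du|^p)$ to cancel, while $\text{div}\,x=n$ leaves exactly $\tfrac{p-n}{p}|Du|^p$. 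This is the only genuinely computational step and is routine.

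Next I would integrate this identity over the annulus $\{r<|x|<R\}$ and apply the divergence theorem. On a sphere $\{|x|=\rho\}$ the integrand of the flux simplifies, since $Du\cdot x=\rho\,(Du\cdot x/|x|)$, to $\rho\big(|Du|^{p-2}(Du\cdot x/|x|)^2-\tfrac1p|Du|^p\big)$. Letting $R\rightarrow\infty$, the outer boundary term is controlled by $R\!\int_{|x|=R}|Du|^p\,dS$, which is $O(R^{n-p})\to 0$ because $p>n$ and $|x||Du(x)|\to0$ by Proposition \ref{pHarmonicProp}. This yields the exterior-domain relation
\be
(p-n)\int_{|x|>r}|Du|^p\,dx=r\int_{|x|=r}|Du|^p\,dS-pr\int_{|x|=r}|Du|^{p-2}\Big(Du\cdot\tfrac{x}{|x|}\Big)^2dS.
\ee

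Now set $F(r):=r^{p-n}\int_{|x|>r}|Du|^p\,dx$. Since $u$ is smooth, $\rho\mapsto\int_{|x|=\rho}|Du|^p\,dS$ is continuous, so $F$ is $C^1$ with $F'(r)=(p-n)r^{p-n-1}\int_{|x|>r}|Du|^p\,dx-r^{p-n}\int_{|x|=r}|Du|^p\,dS$. Substituting the displayed relation to eliminate the solid integral gives the clean formula
\be
F'(r)=-\,p\,r^{p-n}\int_{|x|=r}|Du|^{p-2}\Big(Du\cdot\tfrac{x}{|x|}\Big)^2dS\le 0,
\ee
which proves that \eqref{pHarmonicDecay2} is nonincreasing. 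Integrating $F'$ from $r$ to $\infty$, using $F(\infty)=0$ from Corollary \ref{CorBoundedInt}, and converting the resulting iterated integral by the coarea formula $\int_r^\infty\!\!\int_{|x|=\rho}(\cdots)\,dS\,d\rho=\int_{|x|>r}(\cdots)\,dx$ produces exactly \eqref{MonConpLapInt2}. Finally, \eqref{MonConpLapInt} is immediate: $F$ is nonincreasing and nonnegative with $F(\infty)=0$, so its infimum and limit both vanish.

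The main obstacle is the passage $R\to\infty$ in the divergence theorem together with the differentiation of the solid integral; both rest on the decay $|x||Du(x)|\to0$ and the integrability $\int_{|x|>s}|Du|^p\,dx<\infty$ already provided by Proposition \ref{pHarmonicProp} and Corollary \ref{CorBoundedInt}. A secondary point to keep in mind is the behavior of $|Du|^{p-2}$ at critical points of $u$; since $p>n\ge2$ gives $p>2$, all the integrands appearing above are continuous, so no separate argument near $\{Du=0\}$ is needed and the smoothness hypothesis suffices to legitimize the pointwise identity.
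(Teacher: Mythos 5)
Your proposal is correct and takes essentially the same route as the paper: the identity you derive is exactly the paper's identity $\mathrm{div}\big(\big(Du\cdot x+(\tfrac{n}{p}-1)u\big)p|Du|^{p-2}Du-|Du|^p x\big)=0$ in a slightly different packaging (you keep the volume term $\tfrac{p-n}{p}|Du|^p$ on the right-hand side, while the paper absorbs it into the divergence via the term $(\tfrac{n}{p}-1)u\,p|Du|^{p-2}Du$ and then undoes this with one extra integration by parts), and from there both arguments integrate over an annulus, kill the outer boundary term using $|x||Du(x)|\to 0$, and obtain the same monotonicity formula $F'(r)=-p\,r^{p-n}\int_{|x|=r}|Du|^{p-2}(\partial_r u)^2\,d\sigma$. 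The remaining steps --- monotone decay to $0$ via Corollary \ref{CorBoundedInt} and integration of $F'$ with the coarea formula to get \eqref{MonConpLapInt2} --- coincide with the paper's.
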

\begin{proof}
As $u$ is smooth, direct computation gives  
\be\label{IdentityPLap}
\text{div}\left(\left(Du\cdot x +\left(\frac{n}{p}-1\right)u\right)p|Du|^{p-2}Du-|Du|^px\right)=0
\ee
in $\R^n\setminus \overline{B_1}$ (Chapter 8 section 6 of \cite{MR2597943}). Integrating both sides of \eqref{IdentityPLap} over 
$r<|x|<R$ gives
\begin{align}\label{FirstIntByPartsForMon}
0&=\int_{r<|x|< R}\text{div}\left(\left(Du\cdot x +\left(\frac{n}{p}-1\right)u\right)p|Du|^{p-2}Du-|Du|^px\right)dx\nonumber\\
&=\int_{|x|=R}\left(\left(Du\cdot x +\left(\frac{n}{p}-1\right)u\right)p|Du|^{p-2}Du-|Du|^px\right)\cdot \frac{x}{R}d\sigma\nonumber\\
&\quad -\int_{|x|=r}\left(\left(Du\cdot x +\left(\frac{n}{p}-1\right)u\right)p|Du|^{p-2}Du-|Du|^px\right)\cdot \frac{x}{r}d\sigma\nonumber\\
&=-R\int_{|x|=R}\left(|Du|^p-p|Du|^{p-2}(\partial_ru)^2\right)d\sigma+(n-p)\int_{|x|=R}u|Du|^{p-2}Du\cdot\frac{x}{R}d\sigma\\
&\quad +r\int_{|x|=r}\left(|Du|^p-p|Du|^{p-2}(\partial_ru)^2\right)d\sigma+(n-p)\int_{|x|=r}u|Du|^{p-2}Du\cdot\frac{-x}{r}d\sigma.\nonumber
\end{align}
Here 
$$
\partial_ru(x):=Du(x)\cdot \frac{x}{|x|}
$$
is the radial derivative of $u$ and $\sigma$ is $n-1$ dimensional Hausdorff measure. 
\par In view of \eqref{GradientLimitPLap}, 
$$
-R\int_{|x|=R}\left(|Du|^p-p|Du|^{p-2}(\partial_ru)^2\right)d\sigma+(n-p)\int_{|x|=R}u|Du|^{p-2}Du\cdot\frac{x}{R}d\sigma=o(R^{n-p})$$
as $R\rightarrow \infty$.  So we can send $R\rightarrow \infty$ in \eqref{FirstIntByPartsForMon} to conclude  
\begin{align*}
0&=r\int_{|x|=r}\left(|Du|^p-p|Du|^{p-2}(\partial_ru)^2\right)d\sigma+(n-p)\int_{|x|=r}u|Du|^{p-2}Du\cdot\frac{-x}{r}d\sigma\\
&=r\int_{|x|=r}\left(|Du|^p-p|Du|^{p-2}(\partial_ru)^2\right)d\sigma+(n-p)\int_{|x|>r}|Du|^{p}dx.
\end{align*}

\par Now observe
\begin{align}\label{MonFormulaInt}
\frac{d}{dr}\left\{r^{p-n}\int_{|x|>r}|Du|^pdx\right\}&=(p-n)r^{p-n-1}\int_{|x|>r}|Du|^pdx-r^{p-n}\int_{|x|=r}|Du|^pd\sigma\nonumber\\
&=r^{p-n-1}\left\{(p-n)\int_{|x|>r}|Du|^pdx - r\int_{|x|=r}|Du|^pd\sigma\right\}\nonumber\\
&=r^{p-n-1}\left\{-rp\int_{|x|=r}|Du|^{p-2}(\partial_ru)^2d\sigma\right\}\nonumber\\
&=-pr^{p-n}\int_{|x|=r}|Du|^{p-2}(\partial_ru)^2d\sigma.
\end{align}
As a result, 
$$
(1,\infty)\ni r\mapsto r^{p-n}\int_{|x|>r}|Du|^pdx
$$
is nonincreasing. This quantity tends to $0$ as $r\rightarrow \infty$ by the previous corollary, so we conclude \eqref{MonConpLapInt} 
by monotone convergence.  Integrating the monotonicity formula \eqref{MonFormulaInt} from $r=s$ to $r=\infty$  gives 
\begin{align}
s^{p-n}\int_{|x|>s}|Du|^pdx&=p\int^\infty_sr^{p-n}\int_{|x|=r}|Du|^{p-2}(\partial_ru)^2d\sigma dr \noindent\\
&=p\int^\infty_s\int_{|x|=r}|x|^{p-n}|Du|^{p-2}(\partial_ru)^2d\sigma dr\noindent \\
&=p\int_{|x|>s}|x|^{p-n}|Du|^{p-2}(\partial_ru)^2dx
\end{align}
which is \eqref{MonConpLapInt2}. 
\end{proof}

\section{Asymptotics of extremals}
This section is dedicated to the proof of Theorem \ref{mainThm}. Let $u$ be an extremal satisfying \eqref{HolderMax}. In Proposition 3.5 of \cite{HyndSeuf}, we established that 
\be\label{MorreyBounds}
\min\{u(x_0),u(y_0)\}\le u(x)\le \max\{u(x_0),u(y_0)\}
\ee
for each $x\in \R^n$; this inequality is also established in Lemma \ref{Boundedai} below. As a result, $u$ is uniformly bounded and is $p$-harmonic 
in $\R^n\setminus \overline{B_s}$ for 
$$
s:= \max\{|x_0|,|y_0|\}.
$$
It follows from Proposition \ref{pHarmonicProp} that the limit
$$
\lim_{|x|\rightarrow \infty}u(x)
$$
exists and 
$$
\lim_{|x|\rightarrow \infty}|x||Du(x)|=0.
$$
As $u$ is smooth in $\R^n\setminus \overline{B_s}$  (section 4.3 of  \cite{HyndSeuf}), we can apply Proposition \ref{pHarmonicProp2}
to conclude
$$
r^{p-n}\int_{|x|>r}|Du|^pdx=\int_{|x|>r}|x|^{p-n}|Du|^{p-2}\left(Du\cdot \frac{x}{|x|}\right)^2dx
$$
for $r>s$. Moreover, this quantity is nonincreasing on $(s,\infty)$ and tends to $0$ as $r\rightarrow\infty$. 

\par In Proposition 3.4 of \cite{HyndSeuf}, we showed
$$
u\left(x -2\frac{\left((x_0-y_0)\cdot (x-\frac{1}{2}(x_0+y_0)\right)}{|x_0-y_0|^2}(x_0-y_0)\right)-\frac{u(x_0)+u(y_0)}{2}
=-\left(u(x)-\frac{u(x_0)+u(y_0)}{2}\right)
$$
for each $x\in \R^n$. This equality implies that $u-\frac{1}{2}(u(x_0)+u(y_0))$ is antisymmetric with respect to reflection about the hyperplane 
$$
\Pi:=\left\{x\in \R^n: (x_0-y_0)\cdot \left(x-\frac{1}{2}(x_0+y_0)\right)=0\right\}.
$$
In particular, 
$$
u(x)=\frac{1}{2}(u(x_0)+u(y_0))
$$
for each $x\in \Pi$. As $\Pi$ is unbounded, it must be that 
$$
\lim_{|x|\rightarrow \infty}u(x)=\frac{1}{2}(u(x_0)+u(y_0)).
$$
\begin{figure}[h]
\centering
 \includegraphics[width=1\columnwidth]{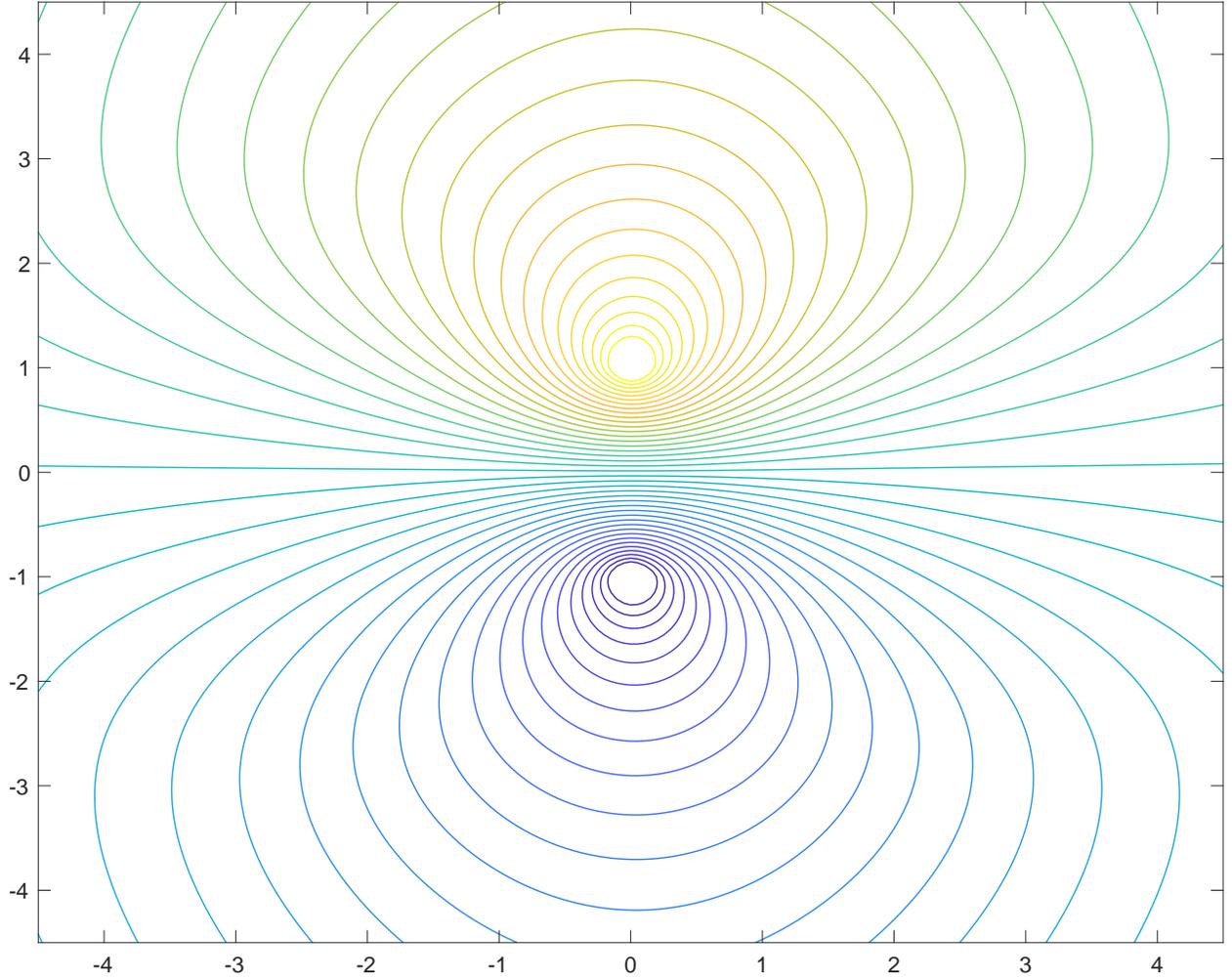}
 \caption{Level sets of the approximate extremal computed for Figure \ref{DipoleFig}. Each level set except the line $x_2=0$ bounds a convex, compact subset of $\R^2$.}\label{ContourPlot}
\end{figure}

\begin{rem}
If $u$ is an extremal which satisfies 
\be
\sup_{x\ne y}\left\{\frac{|u(x)-u(y)|}{|x-y|^{1-n/p}}\right\}=\frac{u(x_0)-u(y_0)}{|x_0-y_0|^{1-n/p}}>0
\ee
for distinct $x_0,y_0\in \R^n$,  
\be
\{x\in \R^n: u(x)\ge t\}\quad\text{and}\quad \{x\in \R^n: u(x)\le s\}
\ee
are convex for
$$
\frac{u(x_0)+u(y_0)}{2}<t\le u(x_0)\quad\text{and}\quad\frac{u(x_0)+u(y_0)}{2}>s\ge u(y_0),
$$
respectively. This was proved in Proposition 4.4 of \cite{HyndSeuf}.  An immediate corollary of Theorem \ref{mainThm} is that these subsets are {\it compact}, as displayed in Figure \ref{ContourPlot}. 
\end{rem}

\section{Quantitative flatness}
We will now establish a Harnack inequality for bounded, nonnegative $p$-harmonic functions on $\R^n\setminus \overline{B_1}$. We will then prove Theorem \ref{DecayThm} similar to how H\"older continuity of $p$-harmonic functions can be established with a Harnack inequality (as explained in section 2 of \cite{MR2242021}).  To this end, we will start with the following comparison principle. 
\begin{lem}
Suppose $r>1$ and that $u,v $ are bounded and $p$-harmonic in $\R^n\setminus \overline{B_1}$ with 
$$
u\le v
$$
on $\partial B_r$. Then 
$$
u\le v
$$
in $\R^n\setminus{\overline{B_{r}}}$. 
\end{lem}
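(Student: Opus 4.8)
The plan is to prove this comparison principle by the monotonicity (Browder--Minty) test-function method, using the gradient decay furnished by Proposition \ref{pHarmonicProp} and Corollary \ref{CorBoundedInt} to annihilate the contribution from infinity. Write $\Omega=\R^n\setminus\overline{B_r}$ and set $w:=(u-v)^+$. Since $u\le v$ on $\partial B_r$ and both functions are continuous, $w$ vanishes on $\partial B_r$; the goal is to show $w\equiv 0$ on $\Omega$. The key algebraic input is the strict monotonicity of the field $\xi\mapsto|\xi|^{p-2}\xi$, namely $(|a|^{p-2}a-|b|^{p-2}b)\cdot(a-b)\ge 0$ for all $a,b\in\R^n$, with equality only when $a=b$. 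I emphasize that this is where $p>n$ is essential: the naive statement can fail for $p\le n$ (there the fundamental solution decays, producing nonconstant bounded radial solutions), whereas for $p>n$ the only bounded radial $p$-harmonic functions are constants, and it is exactly this rigidity, encoded through gradient decay, that the argument exploits.

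First I would test the equations for $u$ and $v$ against $\phi_R:=\zeta_R w$, where $\zeta_R\in C^\infty_c(\R^n)$ is a nested family of cutoffs with $\zeta_R\equiv 1$ on $B_R$, $\operatorname{supp}\zeta_R\subset B_{2R}$, $0\le\zeta_R\le 1$ and $|D\zeta_R|\le C/R$. Because $w=0$ on $\partial B_r$ and $\phi_R$ has compact support, $\phi_R$ is an admissible test function after the usual density approximation by $C^\infty_c(\Omega)$ functions. Subtracting the two weak formulations and using $D\phi_R=\zeta_R Dw+w\,D\zeta_R$ together with $Dw=\mathbf{1}_{\{u>v\}}D(u-v)$ gives
$$
\int_{\Omega}\zeta_R\bigl(|Du|^{p-2}Du-|Dv|^{p-2}Dv\bigr)\cdot Dw\,dx=-\int_{\Omega}w\,\bigl(|Du|^{p-2}Du-|Dv|^{p-2}Dv\bigr)\cdot D\zeta_R\,dx.
$$
Call the left side $I_R$ and the right side $-E_R$. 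By monotonicity the integrand of $I_R$ is nonnegative, so $I_R\ge 0$ and $I_R$ is nondecreasing in $R$.

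The crux, and the step I expect to be the main obstacle, is showing $E_R\to 0$ as $R\to\infty$. Since $w$ is bounded, $|D\zeta_R|\le C/R$, and $D\zeta_R$ is supported in the shell $B_{2R}\setminus B_R$, Hölder's inequality yields
$$
|E_R|\le \frac{C}{R}\,\|w\|_\infty\Bigl(\int_{|x|>R}\bigl(|Du|^p+|Dv|^p\bigr)\,dx\Bigr)^{\frac{p-1}{p}}\bigl(\omega_n(2R)^n\bigr)^{\frac1p}.
$$
By Corollary \ref{CorBoundedInt} the gradient tails $\int_{|x|>R}|Du|^p$ and $\int_{|x|>R}|Dv|^p$ are finite and tend to $0$, so $|E_R|\lesssim o(1)\,R^{n/p-1}\to 0$ precisely because $n<p$. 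Hence $I_R=-E_R\to 0$, and since $I_R\ge 0$ is nondecreasing, monotone convergence gives $\int_{\Omega}(|Du|^{p-2}Du-|Dv|^{p-2}Dv)\cdot Dw\,dx=0$. Strict monotonicity then forces $Du=Dv$ a.e.\ on $\{u>v\}$, so that $Dw=0$ a.e.\ on $\Omega$.

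Finally I would conclude by connectedness. Since $\Omega=\{|x|>r\}$ is connected for $n\ge 2$ and $w\in W^{1,p}_{\mathrm{loc}}(\Omega)$ satisfies $Dw=0$ a.e., $w$ is constant on $\Omega$; as $w$ is continuous up to $\partial B_r$ with $w|_{\partial B_r}=0$, that constant is $0$. Therefore $(u-v)^+\equiv 0$, i.e.\ $u\le v$ throughout $\R^n\setminus\overline{B_r}$. It is worth noting that the existence of the limits at infinity from Proposition \ref{pHarmonicProp} is what underlies the finiteness and decay of the gradient tails used above, so no separate comparison of the limiting values is required: the inequality between the limits of $u$ and $v$ emerges automatically from the conclusion.
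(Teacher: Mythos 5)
Your proposal is correct and follows essentially the same strategy as the paper: test with $(u-v)^+$, exploit the monotonicity of $\xi\mapsto|\xi|^{p-2}\xi$, annihilate the contribution from infinity using the gradient decay of Proposition \ref{pHarmonicProp}, and conclude that $D(u-v)^+=0$ so that $(u-v)^+$ is constant, hence zero by its boundary values. The only (cosmetic) difference is the device used at infinity: you use smooth cutoffs, H\"older, and the integral tail decay of Corollary \ref{CorBoundedInt}, while the paper integrates by parts over annuli $r<|x|<R$ and bounds the surface term via the pointwise decay $|x||Du(x)|\to 0$.
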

\begin{proof}
In view of the monotonicity of the mapping $z\mapsto |z|^{p-2}z$, 
\begin{align}\label{ComparisonInitialEstimate}
0&\le \int_{\{u>v\}\cap\{|x|>r\} }(|Du|^{p-2}Du-|Dv|^{p-2}Dv)\cdot (Du-Dv)dx \noindent\\
&=  \int_{|x|>r}(|Du|^{p-2}Du-|Dv|^{p-2}Dv)\cdot D(u-v)^+dx.
\end{align}
As $(u-v)^+$ is bounded and vanishes on $\partial B_r$, we can integrate by parts and appeal to  \eqref{GradientLimitPLap} in order to deduce
\begin{align}\label{ComparisonSecondEstimate}
 &\int_{|x|>r}(|Du|^{p-2}Du-|Dv|^{p-2}Dv)\cdot D(u-v)^+dx\\
 &\hspace{1in}=\lim_{R\rightarrow\infty} \int_{r<|x|<R}(|Du|^{p-2}Du-|Dv|^{p-2}Dv)\cdot D(u-v)^+dx \\
 &\hspace{1in}=\lim_{R\rightarrow\infty} \int_{|x|=R}(u-v)^+(|Du|^{p-2}Du-|Dv|^{p-2}Dv)\cdot \frac{x}{R}dx\\
 &\hspace{1in}=\lim_{R\rightarrow\infty} o(R^{n-p})\\
 &\hspace{1in}=0.
\end{align}
Combining with \eqref{ComparisonInitialEstimate} gives
$$
 \int_{\{u>v\}\cap\{|x|>r\} }(|Du|^{p-2}Du-|Dv|^{p-2}Dv)\cdot (Du-Dv)dx=0.
$$

\par As $z\mapsto |z|^{p-2}z$ is strictly monotone, it must either be that the Lebesgue measure of ${\{u>v\}\cap\{|x|>r\} }$ is zero or that $Du=Dv$ almost everywhere in ${\{u>v\}\cap\{|x|>r\} }$. If the Lebesgue measure of ${\{u>v\}\cap\{|x|>r\} }$ is zero, $u(x)\le v(x)$ for almost every $|x|>r$; as $u,v$ are continuous, this would imply that $u(x)\le v(x)$ for every $|x|>r$. Otherwise, $D(u-v)^+=0$ in $\R^n\setminus \overline{B_r}$ which would mean  $(u-v)^+$ is constant throughout $\R^n\setminus \overline{B_r}.$ Since 
$(u-v)^+$ vanishes on $\partial B_r$, we would have $(u-v)^+\equiv 0$ in $\R^n\setminus \overline{B_r}$. That is, $u\le v$ in $\R^n\setminus \overline{B_r}$.
\end{proof}
\begin{cor}\label{WeakCompCor}
Suppose $u$ is $p$-harmonic and bounded in $\R^n\setminus \overline{B_1}$.

\begin{enumerate}[(i)]

\item For each $r> 1$,
$$
\sup_{|x|\ge r}u(x)=\sup_{|x|=r}u(x)\quad\text{and}\quad \inf_{|x|\ge r}u(x)=\inf_{|x|=r}u(x).
$$

\item For $1< r_1\le r_2$,
$$
\sup_{|x|= r_1}u(x)\ge \sup_{|x|=r_2}u(x)\quad\text{and}\quad \inf_{|x|= r_1}u(x)\le \inf_{|x|=r_2}u(x).
$$

\item For $1< r_1\le r_2$,
$$
\sup_{|x|= r_1}u(x)= \sup_{r_1\le|x|\le r_2}u(x)\quad\text{and}\quad \inf_{|x|= r_1}u(x)= \inf_{r_1\le|x|\le r_2}u(x).
$$

\item If $r>1$ and $a<u(x)<b$ for $|x|=r$, then 
$$
a<\lim_{|x|\rightarrow\infty}u(x)<b.
$$
\end{enumerate}

\end{cor}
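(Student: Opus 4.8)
The plan is to derive all four parts from the preceding comparison lemma, using constant functions as the comparison functions; constants are trivially bounded and $p$-harmonic, so the lemma applies to them. I would begin with part (i), which is the heart of the matter. Fix $r>1$ and set $M:=\sup_{|x|=r}u(x)$, which is finite since $u$ is continuous on the compact sphere $\partial B_r$. The constant $v\equiv M$ is bounded and $p$-harmonic, and $u\le v$ on $\partial B_r$ by the definition of $M$; the comparison lemma then yields $u\le M$ throughout $\R^n\setminus\overline{B_r}$, so that $\sup_{|x|\ge r}u=M=\sup_{|x|=r}u$. The statement for the infimum follows identically by comparing the constant $m:=\inf_{|x|=r}u(x)$ against $u$ from below.

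Parts (ii) and (iii) are then purely set-theoretic consequences of (i). For (ii), with $1<r_1\le r_2$ the sphere $\{|x|=r_2\}$ is contained in $\{|x|\ge r_1\}$, so $\sup_{|x|=r_2}u\le \sup_{|x|\ge r_1}u=\sup_{|x|=r_1}u$ by (i), and the infimum inequality is analogous. For (iii) I would observe that the annulus $\{r_1\le|x|\le r_2\}$ sits inside $\{|x|\ge r_1\}$, whence $\sup_{r_1\le|x|\le r_2}u\le \sup_{|x|\ge r_1}u=\sup_{|x|=r_1}u$ by (i), while the reverse inequality is immediate from $\{|x|=r_1\}\subset\{r_1\le|x|\le r_2\}$; once more the infimum case is identical.

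Finally, for (iv) I would combine (i) with the existence of the limit guaranteed by Proposition \ref{pHarmonicProp}. Since $u$ attains its extrema on the compact sphere $\partial B_r$, the hypothesis $a<u<b$ on $\{|x|=r\}$ upgrades to the strict bounds $m:=\inf_{|x|=r}u>a$ and $M:=\sup_{|x|=r}u<b$. By (i) we then have $m\le u(x)\le M$ for all $|x|\ge r$, and passing to the limit $|x|\to\infty$ preserves these (non-strict) bounds, so $a<m\le \lim_{|x|\to\infty}u\le M<b$.

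I expect no serious obstacle here; the only point requiring care is securing the \emph{strict} inequalities in (iv), which is precisely why I invoke the attainment of the maximum and minimum on the compact sphere rather than working directly with the open bounds $a<u<b$. This attainment converts the strict pointwise bounds on $\partial B_r$ into strict bounds on the constants $m$ and $M$, which then survive the (otherwise limit-weakening) passage to $|x|\to\infty$.
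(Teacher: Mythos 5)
Your argument is correct and follows essentially the same route as the paper: part (i) by comparing $u$ against the constant functions $\sup_{|x|=r}u$ and $\inf_{|x|=r}u$ via the preceding comparison lemma, parts (ii) and (iii) as set-theoretic consequences of (i), and part (iv) by using attainment of the extrema on the compact sphere $\partial B_r$ to turn the strict pointwise bounds into strict bounds on the constants before passing to the limit. The paper phrases the last step with an explicit $\delta>0$ satisfying $\sup_{|x|=r}u<b-\delta$, but this is the same idea as your appeal to compactness.
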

\begin{proof}
We will only prove the statements involving suprema.  $(i)$ Let $v$ denote the constant function on $\R^n$ which 
is equal to $\sup_{|x|=r}u(x)$. As $v$ is bounded, $p$-harmonic, and $u(y)\le v(y)$ for $|y|=r$, it follows that 
$u(y)\le v(y)$ for each $|y|\ge r$. That is, 
$$
u(y)\le \sup_{|x|=r}u(x),\quad |y|\ge r.
$$
$(ii)$ By part $(i)$, 
$$
\sup_{|x|=r_1}u(x)=\sup_{|x|\ge r_1}u(x)\ge \sup_{|x|\ge r_2}u(x)=\sup_{|x|=r_2}u(x).
$$
$(iii)$ Part $(i)$ also implies 
$$
\sup_{|x|\ge r_1}u(x)=\sup_{|x|=r_1}u(x)\le \sup_{r_1\le |x|\le r_2}u(x)\le\sup_{|x|\ge r_1}u(x). 
$$
$(iv)$ Choose $\delta>0$ so small that $\sup_{|x|=r}u(x) < b-\delta$. By part $(i)$, $u(x)<b-\delta$ for each $|x|\ge r$.  Consequently, $\lim_{|x|\rightarrow\infty}u(x)\le b-\delta<b$. 
\end{proof}

The following harnack inequality is now an easy consequence of these observations. 
\begin{prop}\label{HarnackLemma}
There is a constant $C>1$ such that 
\be\label{HarnackIN}
\sup_{|x|\ge 2r}u(x)\le C\inf_{|x|\ge 2r}u(x)
\ee
for each $r>0$ and bounded, nonnegative $p$-harmonic $u$ in $\R^n\setminus \overline{B_r}$.
\end{prop}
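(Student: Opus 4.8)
The plan is to reduce the claim to a single fixed scale and then combine the classical interior Harnack inequality for nonnegative $p$-harmonic functions with a Harnack chain run along the sphere $\{|x|=2r\}$.

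First I would eliminate the parameter $r$ by scaling. Given a bounded, nonnegative $p$-harmonic function $u$ on $\R^n\setminus\overline{B_r}$, set $v(x):=u(rx)$ for $|x|>1$. A direct change of variables shows that $v$ is again bounded, nonnegative, and $p$-harmonic in $\R^n\setminus\overline{B_1}$, and that $\sup_{|x|\ge 2r}u=\sup_{|y|\ge 2}v$ while $\inf_{|x|\ge 2r}u=\inf_{|y|\ge 2}v$. Hence it suffices to produce a constant $C=C(n,p)>1$ with $\sup_{|y|\ge 2}v\le C\inf_{|y|\ge 2}v$ for every such $v$; the same $C$ then works for all $r>0$. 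Next I would invoke Corollary \ref{WeakCompCor}$(i)$ to replace the suprema and infima over the exterior region $\{|y|\ge 2\}$ by those over the single sphere $S:=\{|y|=2\}$, reducing the task to $\max_S v\le C\min_S v$ (both extrema are attained, since $S$ is compact and $v$ is continuous).

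The core of the argument is then a Harnack chain on $S$. For each $z\in S$ the closed ball $\overline{B_{1/2}(z)}$ is contained in $\{|y|\ge 3/2\}\subset\R^n\setminus\overline{B_1}$, so $v$ is nonnegative and $p$-harmonic there; the classical interior Harnack inequality (see section 2 of \cite{MR2242021}) supplies a constant $C_0=C_0(n,p)$ with $\sup_{B_{1/4}(z)}v\le C_0\inf_{B_{1/4}(z)}v$. Since $n\ge 2$, the sphere $S$ is connected and compact, so I can cover it by finitely many balls $B_{1/4}(z_1),\dots,B_{1/4}(z_m)$ with $z_i\in S$, where $m$ depends only on $n$. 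Given $x_*,y_*\in S$ with $v(x_*)=\max_S v$ and $v(y_*)=\min_S v$, connectedness lets me select a chain of these balls with consecutive members overlapping, running from one containing $x_*$ to one containing $y_*$; evaluating $v$ at the successive overlap points and applying the local Harnack inequality on each ball propagates the estimate and yields $v(x_*)\le C_0^{\,m}\,v(y_*)$. Taking $C:=C_0^{\,m}$ then proves the proposition.

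I expect the only genuine analytic input to be the interior Harnack inequality for nonnegative $p$-harmonic functions, a standard but nontrivial fact quoted from the literature; the scaling and chaining steps are routine. The one point requiring care is that the number of covering balls $m$, and hence the final constant $C=C_0^{\,m}$, depend only on $n$ and $p$ and not on the particular function $u$ or on $r$ — which is exactly what the scaling reduction secures.
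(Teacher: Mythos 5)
Your proof is correct and follows essentially the same route as the paper's: rescale to $r=1$, use Corollary \ref{WeakCompCor} to collapse the exterior supremum and infimum onto a compact set at radius $2$, and then apply a fixed-scale Harnack inequality there. The only difference is that you rebuild that fixed-scale inequality by an explicit chain of balls covering the sphere $\{|x|=2\}$, whereas the paper simply quotes Serrin's Harnack inequality on the annulus $\{2<|x|<3\}\subset\{1<|x|<4\}$; the two are interchangeable.
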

\begin{rem}
The example $u(x)=|x|^{\frac{p-n}{p-1}}$ shows that the boundedness assumption cannot be removed.  
\end{rem}
\begin{proof}
First suppose $r=1$ and choose $C>1$ such that 
\be\label{LocalHarnack}
\sup_{2<|x|<3}v\le C\inf_{2<|x|<3}v.
\ee
for each for each nonnegative $p$-harmonic function $v$ on $\{1<|x|<4\}$. Such a constant $C$ exists by the Harnack inequality proved by Serrin in section 5 of \cite{MR0170096}.  In view of Corollary \ref{WeakCompCor} and \eqref{LocalHarnack},
\begin{align*}
\sup_{|x|\ge 2}u(x)&= \sup_{2\le |x|\le 3}u(x) \\
&\le C\inf_{2\le |x|\le 3}u(x) \\ 
&= C\inf_{|x|\ge 2}u(x).
\end{align*}

\par For general $r>0$, we set $w(y)=u(ry)$ for $|y|>1$.  Then $w$ is bounded, nonnegative, and $p$-harmonic in $\R^n\setminus \overline{B_1}$. By our computation above, 
$$
\sup_{|y|\ge 2}w(y)\le C\inf_{|y|\ge 2}w(y)
$$
with the constant $C$ from \eqref{LocalHarnack}. It follows that \eqref{HarnackIN} holds with this constant $C$.
\end{proof}
Along with this Harnack inequality, we will need one more fact to prove Theorem \ref{DecayThm}. 
\begin{lem}\label{DecayLem}
Suppose $f: [1,\infty)\rightarrow [0,\infty)$ is nonincreasing and satisfies 
$$
f(2r)\le \mu f(r), \quad r\ge 1
$$
for some $\mu\in (0,1)$. Then 
$$
f(r)\le \frac{1}{\mu}r^{\left(\frac{\ln \mu}{\ln 2}\right)}f(1)
$$
for $r\ge 1$. 
\end{lem}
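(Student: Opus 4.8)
The plan is to iterate the doubling hypothesis along the dyadic scales $r=2^k$ and then use the monotonicity of $f$ to fill in the intermediate values. First I would prove by induction on $k\in\N$ that
$$
f(2^k)\le \mu^k f(1).
$$
The base case $k=0$ is trivial, and the inductive step is immediate: applying the hypothesis with $r=2^k\ge 1$ gives $f(2^{k+1})\le \mu f(2^k)\le \mu\cdot \mu^k f(1)=\mu^{k+1}f(1)$.

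Next, for an arbitrary $r\ge 1$, I would set $k=\lfloor \log_2 r\rfloor$, the largest integer with $2^k\le r$. Since $f$ is nonincreasing and $2^k\le r$, the dyadic bound yields
$$
f(r)\le f(2^k)\le \mu^k f(1),
$$
and it remains only to convert the factor $\mu^k$ into the claimed power of $r$. Because $\mu\in(0,1)$, the map $x\mapsto \mu^x$ is decreasing, so a lower bound on $k$ produces an upper bound on $\mu^k$; from $k=\lfloor \log_2 r\rfloor> \log_2 r-1$ I obtain
$$
\mu^k\le \mu^{\log_2 r-1}=\frac{1}{\mu}\,\mu^{\log_2 r}.
$$
Writing $\mu^{\log_2 r}=e^{(\log_2 r)\ln\mu}=e^{(\ln r/\ln 2)\ln\mu}=r^{\ln\mu/\ln 2}$ and combining the two displays gives
$$
f(r)\le \frac{1}{\mu}\,r^{\ln\mu/\ln 2}\,f(1),
$$
which is exactly the asserted estimate.

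The argument is elementary, so there is no genuine obstacle; the only points requiring care are keeping the two inequalities pointed in the correct directions. Because $f$ is nonincreasing one must round $\log_2 r$ \emph{down} so that $2^k\le r$, whereas because $\mu<1$ one must then estimate $\mu^k$ using the \emph{lower} bound $k>\log_2 r-1$. It is precisely the interaction of these two opposite roundings that produces the harmless prefactor $1/\mu$ appearing in the statement, and tracking it correctly is the one place where a sign or direction error would spoil the bound.
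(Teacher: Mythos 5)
Your proof is correct and follows essentially the same route as the paper's: the dyadic induction $f(2^k)\le\mu^k f(1)$, monotonicity to pass from $r$ down to the nearest dyadic scale below it, and the bound $\mu^k\le \mu^{\log_2 r-1}$ to produce the prefactor $1/\mu$. The only difference is a harmless index shift (you take $2^k\le r<2^{k+1}$ where the paper writes $2^{k-1}\le r<2^k$).
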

\begin{rem}
$\ln\mu<0$, so $f(r)$ decays like a power of $r$ as $r\rightarrow \infty$.
\end{rem}
\begin{proof}
By induction, 
$$
f(2^k)\le \mu^kf(1)
$$
for each nonnegative integer $k$. Choose $k\in \N$ so that 
$$
2^{k-1}\le r< 2^k
$$ 
and 
$$
k-1\le \frac{\ln r}{\ln 2}< k.
$$
Then 
\begin{align*}
f(r)&\le f(2^{k-1})\\
&\le \frac{1}{\mu}\mu^kf(1)\\
&\le \frac{1}{\mu}\mu^{\frac{\ln r}{\ln 2}}f(1)\\
&=\frac{1}{\mu}r^{\frac{\ln \mu}{\ln 2}}f(1).
\end{align*}
\end{proof}
\begin{proof}[Proof of Theorem \ref{DecayThm}]
Set 
$$
M(r):=\sup_{|x|\ge r}u(x),\quad m(r):=\inf_{|x|\ge r}u(x),\quad\text{and}\quad \omega(r):=M(r)-m(r)
$$
for $r\ge 1$.  Observe that $M(r), -m(r)$ and $\omega(r)$ are nonincreasing. Also note
$u(x)-m(r)$ is a bounded, nonnegative $p$-harmonic function for $|x|\ge r$. 
By Proposition \ref{HarnackIN},
$$
M(2r)-m(r)=\sup_{|x|\ge 2r}(u(x)-m(r))\le C\inf_{|x|\ge 2r}(u(x)-m(r))=C(m(2r)-m(r))
$$
for some $C>1$ independent of $r$. Likewise $M(r)-u(x)$ is a bounded, nonnegative $p$-harmonic function for $|x|\ge r$, so
$$
M(r)-m(2r)=\sup_{|x|\ge 2r}(M(r)-u(x))\le C\inf_{|x|\ge 2r}(M(r)-u(x))=C(M(r)-M(2r)).
$$
\par Adding these inequalities gives 
$$
\omega(2r)+\omega(r)\le C(-\omega(2r)+\omega(r)).
$$
That is, 
$$
\omega(2r)\le \frac{C-1}{C+1}\omega(r),\quad r\ge 1. 
$$
By the Lemma \ref{DecayLem}, 
$$
\omega(r)\le \frac{A\|u\|_\infty}{r^\alpha}
,\quad r\ge 1
$$
for some $\alpha,A>0$; here we used $\omega(1)\le 2\|u\|_\infty$.  In particular, 
$$
\sup\Big\{|u(x)-u(y)|:|x|, |y|\ge r\Big\} \le \frac{A \|u\|_\infty}{r^\alpha}
$$
for $r\ge 1$. 
\end{proof}
A minor variation of our proof of Theorem \ref{DecayThm} combined with \eqref{MorreyBounds} gives the following conclusion. 
\begin{cor}
Assume $u$ is an extremal which satisfies \eqref{HolderMax}. There are positive $A, \alpha,$ and $s$ such that
$$
\left|u(x)-\frac{1}{2}(u(x_0)+u(y_0))\right|\le  \frac{A\max\{|u(x_0)|,|u(y_0)|\}}{|x|^{\alpha}}
$$
for each $|x|>s$. 
\end{cor}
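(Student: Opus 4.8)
The plan is to reduce the claim to Theorem~\ref{DecayThm} by a rescaling, and then to upgrade the two-point oscillation estimate furnished by that theorem into a pointwise bound for the distance of $u$ from its limiting value. Throughout, write $\beta:=\frac{1}{2}(u(x_0)+u(y_0))$ and $s:=\max\{|x_0|,|y_0|\}$. Recall from the discussion establishing Theorem~\ref{mainThm} that the extremal $u$ is bounded and $p$-harmonic in $\R^n\setminus\overline{B_s}$, and that $\lim_{|x|\to\infty}u(x)=\beta$ (this is \eqref{FlatMorrey}).

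First I would remove the inner radius $s$ by scaling. Set $w(y):=u(sy)$ for $|y|>1$; then $w$ is bounded and $p$-harmonic in $\R^n\setminus\overline{B_1}$, it satisfies $\|w\|_\infty=\|u\|_\infty$, and $\lim_{|y|\to\infty}w(y)=\beta$. Applying Theorem~\ref{DecayThm} to $w$ produces positive constants $A,\alpha$ with
\[
\sup\bigl\{|w(y)-w(z)|:|y|,|z|\ge\rho\bigr\}\le\frac{A\|w\|_\infty}{\rho^\alpha},\qquad\rho\ge1.
\]

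The key step is to convert this two-point oscillation bound into a one-point bound against $\beta$. Fix $y$ with $|y|=\rho\ge1$. For every $z$ with $|z|\ge\rho$ the displayed inequality controls $|w(y)-w(z)|$, so letting $|z|\to\infty$ and using $w(z)\to\beta$ gives $|w(y)-\beta|\le A\|w\|_\infty\rho^{-\alpha}$. Undoing the scaling by setting $y=x/s$, so that $\rho=|x|/s>1$ whenever $|x|>s$, this reads
\[
|u(x)-\beta|\le\frac{A s^\alpha\|u\|_\infty}{|x|^\alpha},\qquad|x|>s.
\]
Finally I would invoke \eqref{MorreyBounds}, which yields $\|u\|_\infty\le\max\{|u(x_0)|,|u(y_0)|\}$, and absorb the harmless factor $s^\alpha$ into a renamed constant $A$ to obtain the stated estimate.

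I do not anticipate a genuine obstacle, since every ingredient is already available and the argument is essentially bookkeeping. The two points demanding a little care are, first, checking that the scaling $y\mapsto sy$ preserves both $p$-harmonicity and the sup-norm while moving the inner boundary to $\partial B_1$, so that Theorem~\ref{DecayThm} truly applies to $w$; and second, the passage from the oscillation estimate to the bound against $\beta$, which relies on the already-established limit $\lim_{|x|\to\infty}u(x)=\beta$ together with the fact that the supremum over all $|z|\ge\rho$ dominates $|w(y)-\beta|$. Tracking the constant through the scaling (the extra factor $s^\alpha$) and through \eqref{MorreyBounds} is routine.
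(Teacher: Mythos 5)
Your argument is correct and follows the route the paper intends: the authors give no explicit proof, remarking only that the corollary follows from ``a minor variation of our proof of Theorem \ref{DecayThm} combined with \eqref{MorreyBounds},'' and your rescaling to $\partial B_1$, passage to the limit $|z|\to\infty$ using \eqref{FlatMorrey}, and absorption of $s^\alpha$ and $\|u\|_\infty\le\max\{|u(x_0)|,|u(y_0)|\}$ into the constant is exactly that variation. The only cosmetic nitpick is that $\|w\|_{L^\infty(\R^n\setminus\overline{B_1})}$ equals $\sup_{|x|>s}|u(x)|\le\|u\|_\infty$ rather than $\|u\|_\infty$ itself, which of course only helps.
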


\section{Multipole equation}
We define  
\be\label{SpaceCee}
{\cal D}^{1,p}(\R^n):=\left\{u\in L^{1}_{\text{loc}}(\R^n): u_{x_i}\in L^p(\R^n)\;\text{for $i=1,\dots, n$}\right\}
\ee
and suppose $x_1,\dots, x_N\in \R^n$ are distinct and $a_1,\dots, a_N\in \R$ are given.  Let us consider the minimization problem: find $v\in {\cal D}^{1,p}(\R^n)$ which minimizes the integral 
\be\label{pDirichlet}
\int_{\R^n}|Dv|^pdx
\ee
subject to the constraints
\be\label{MultiConstraint}
v(x_i)=a_i,\quad i=1,\dots, N. 
\ee
Direct methods from the calculus of variations can be used to show that there is a minimizer $u\in {\cal D}^{1,p}(\R^n)$. Moreover, as the Dirichlet integral \eqref{pDirichlet} is strictly convex, $u$ is unique.   

\par These observations were first noted by Kichenassamy in  section 2.3 of \cite{MR873469}.  Discrete analogs of this minimization problem also arise in semi-supervised learning with labels as studied recently by Calder \cite{MR3893728} and by Slep\v{c}ev and Thorpe \cite{MR3953458}. We became interested in this problem when we noticed that the minimizer $u$ above satisfies a generalized version of the PDE solved by Morrey extremals. 
\begin{prop}
(i) Suppose $u\in {\cal D}^{1,p}(\R^n)$ minimizes \eqref{pDirichlet} subject to the constraints \eqref{MultiConstraint}. Then there are constants $c_1,\dots, c_N\in \R$ such that 
\be\label{pMultiCond}
\int_{\R^n}|Du|^{p-2}Du\cdot D\phi dx=\sum^N_{i=1}c_i\phi(x_i)
\ee
for all $\phi\in{\cal D}^{1,p}(\R^n)$.\\
(ii) Conversely, assume $u\in {\cal D}^{1,p}(\R^n)$ satisfies \eqref{pMultiCond} and the constraints \eqref{MultiConstraint}. Then $u$ minimizes \eqref{pDirichlet} among all $v\in {\cal D}^{1,p}(\R^n)$ which satisfy \eqref{MultiConstraint}. 
\end{prop}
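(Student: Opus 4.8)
The plan is to recognize this as the Euler--Lagrange characterization of a minimizer subject to finitely many affine point constraints, proving part (i) by a Lagrange-multiplier argument and part (ii) by convexity. The essential structural input, used throughout, is that $p>n$: by Morrey's inequality \eqref{MorreyIneq} every element of ${\cal D}^{1,p}(\R^n)$ has a (H\"older) continuous representative, so each point evaluation $L_i(\phi):=\phi(x_i)$ is a well-defined linear functional on ${\cal D}^{1,p}(\R^n)$. I would also record at the outset that $\Lambda(\phi):=\int_{\R^n}|Du|^{p-2}Du\cdot D\phi\,dx$ is a finite linear functional on ${\cal D}^{1,p}(\R^n)$: since $\bigl\||Du|^{p-2}Du\bigr\|_{p/(p-1)}=\|Du\|_p^{p-1}<\infty$ while $D\phi\in L^p(\R^n)$, H\"older's inequality guarantees convergence.

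For part (i), the first step is to show that $\Lambda$ annihilates the common kernel $\bigcap_{i=1}^N\ker L_i$. If $\phi\in{\cal D}^{1,p}(\R^n)$ has $\phi(x_i)=0$ for all $i$, then $u+t\phi$ satisfies \eqref{MultiConstraint} for every $t\in\R$, so $g(t):=\int_{\R^n}|D(u+t\phi)|^p\,dx$ is minimized at $t=0$. Differentiating under the integral sign gives $g'(0)=p\,\Lambda(\phi)$, whence $\Lambda(\phi)=0$. The second step is the elementary algebraic fact that a linear functional vanishing on the common kernel of finitely many linear functionals lies in their span: $\Lambda$ factors through the linear map $T\colon\phi\mapsto(L_1(\phi),\dots,L_N(\phi))\in\R^N$ (its kernel lies in $\ker\Lambda$), and the induced functional on $\mathrm{im}\,T$ extends to one on all of $\R^N$, necessarily of the form $y\mapsto\sum_i c_iy_i$. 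This delivers constants $c_1,\dots,c_N$ with $\Lambda=\sum_i c_iL_i$, which is exactly \eqref{pMultiCond}.

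For part (ii), I would argue directly from convexity of $\xi\mapsto|\xi|^p$. Given $u$ satisfying \eqref{pMultiCond} and \eqref{MultiConstraint} and any competitor $v\in{\cal D}^{1,p}(\R^n)$ obeying \eqref{MultiConstraint}, set $\phi:=v-u$, so $\phi(x_i)=0$ for every $i$. The gradient inequality $|\xi|^p\ge|\eta|^p+p|\eta|^{p-2}\eta\cdot(\xi-\eta)$ with $\xi=Dv$, $\eta=Du$, integrated over $\R^n$, yields
\begin{align*}
\int_{\R^n}|Dv|^p\,dx
&\ge\int_{\R^n}|Du|^p\,dx+p\int_{\R^n}|Du|^{p-2}Du\cdot D\phi\,dx\\
&=\int_{\R^n}|Du|^p\,dx+p\sum_{i=1}^N c_i\phi(x_i)
=\int_{\R^n}|Du|^p\,dx,
\end{align*}
using \eqref{pMultiCond} and then $\phi(x_i)=0$. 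Hence $u$ minimizes \eqref{pDirichlet} among admissible $v$.

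The only genuinely delicate point I anticipate is justifying the differentiation under the integral sign that produces $g'(0)=p\,\Lambda(\phi)$. The integrand $|Du+tD\phi|^p$ is smooth in $t$ for fixed $x$, with $t$-derivative $p|Du+tD\phi|^{p-2}(Du+tD\phi)\cdot D\phi$, and for $|t|\le1$ this is dominated by $p(|Du|+|D\phi|)^{p-1}|D\phi|$, which is integrable by H\"older since $|Du|+|D\phi|\in L^p(\R^n)$ and $D\phi\in L^p(\R^n)$; dominated convergence then legitimizes differentiating under the integral. Everything else is bookkeeping, all of it resting on the $p>n$ hypothesis that makes the pointwise constraints meaningful on ${\cal D}^{1,p}(\R^n)$ in the first place.
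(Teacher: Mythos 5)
Your proposal is correct, and for part (i) it takes a genuinely different route from the paper. The paper first notes that $u$ is $p$-harmonic off the poles, integrates by parts over $\R^n\setminus\bigcup_{i}B_r(x_i)$, and invokes the Kichenassamy--V\'eron classification of isolated singularities of $p$-harmonic functions to show that each boundary flux $-\int_{\partial B_r(x_i)}\phi\,|Du|^{p-2}Du\cdot\frac{x-x_i}{r}\,d\sigma$ converges to $c_i\phi(x_i)$ as $r\to 0$. You replace all of this with the first variation along directions $\phi$ with $\phi(x_1)=\dots=\phi(x_N)=0$, followed by the elementary annihilator lemma that a linear functional vanishing on $\bigcap_i\ker L_i$ lies in the span of the $L_i$. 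This is softer and more self-contained: it avoids the external singularity classification and the need to justify integration by parts on an unbounded domain with boundary, and your handling of the one delicate step --- differentiating $t\mapsto\int|Du+tD\phi|^p\,dx$ under the integral sign via the dominated bound $p(|Du|+|D\phi|)^{p-1}|D\phi|\in L^1$ --- is sound. What the paper's route buys in exchange is an identification of $c_i$ as the limiting flux of $|Du|^{p-2}Du$ through small spheres about $x_i$, i.e., a quantity determined by the local singular behaviour of $u$ there; your multiplier argument produces the constants abstractly and does not give this interpretation, though it is not needed for the statement as posed. Part (ii) of your argument (the gradient inequality for the convex integrand combined with $\phi(x_i)=0$) is essentially identical to the paper's.
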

\begin{rem}
Choosing $\phi\equiv 1$ in \eqref{pMultiCond}, we see that  $\sum^N_{i=1}c_i=0$.
\end{rem}
\begin{rem}
If $u\in {\cal D}^{1,p}(\R^n)$ satisfies \eqref{pMultiCond}, then $u$ is a solution of the multipole equation
\be\label{pMultiPole}
-\Delta_pu=\sum^N_{i=1}c_i\delta_{x_i}
\ee
\end{rem}
\begin{proof}
$(i)$ Let $\phi\in {\cal D}^{1,p}(\R^n)$ and choose $r>0$ so small that all of the balls $B_r(x_1),\dots, B_r(x_N)$ are disjoint. It is straightforward to check that $u$ is $p$-harmonic in  $\R^n\setminus\bigcup^N_{i=1}B_r(x_i)$. 
Consequently, we can integrate by parts to find
\be\label{pMultiCond2}
\int_{\R^n\setminus\bigcup^N_{i=1}B_r(x_i) }|Du|^{p-2}Du\cdot D\phi dx =-\sum^N_{i=1}\int_{\partial B_r(x_i)}\phi |Du|^{p-2}Du\cdot \frac{x-x_i}{r}d\sigma.
\ee
By Theorem 1.1 and Remark 1.6 of \cite{KicVer},  
$$
\lim_{r\rightarrow 0}\left[-\int_{\partial B_r(x_i)}\phi |Du|^{p-2}Du\cdot \frac{x-x_i}{r}d\sigma\right]=c_i\phi(x_i)
$$
for some $c_i\in \R$ independent of $\phi$ for each $i=1,\dots, N$.  As a result, we can send $r\rightarrow 0^+$ in \eqref{pMultiCond2} and conclude \eqref{pMultiCond}. 
\par $(ii)$ Suppose $u\in {\cal D}^{1,p}(\R^n)$ fulfills \eqref{pMultiCond} and
 that $u, v\in {\cal D}^{1,p}(\R^n)$ satisfy \eqref{MultiConstraint}. As 
 $$
 |z|^p\ge |w|^p+p|w|^{p-2}w\cdot (z-w)
 $$ 
for all $z,w\in \R^n$, 
 $$
|Dv|^p\ge |Du|^p+p|Du|^{p-2}Du\cdot (Dv-Du)
 $$
holds almost everywhere in $\R^n$. Integrating this inequality gives
 \begin{align*}
\int_{\R^n}|Dv|^pdx&\ge \int_{\R^n}|Du|^pdx+p\int_{\R^n}|Du|^{p-2}Du\cdot D(v-u)dx\\
&= \int_{\R^n}|Du|^pdx+p\sum^N_{i=1}c_i(u-v)(x_i)\\
&= \int_{\R^n}|Du|^pdx.
 \end{align*}
\end{proof}
It also turns out that minimizers are uniformly bounded. 
\begin{lem}\label{Boundedai}
Suppose $u\in {\cal D}^{1,p}(\R^n)$ minimizes \eqref{pDirichlet} subject to the constraints \eqref{MultiConstraint}. Then 
\be
\min_{1\le i\le N}a_i\le u(x)\le \max_{1\le i\le N}a_i
\ee
for each $x\in \R^n$.  Moreover, if not all of the $a_i$ are identical,
\be\label{SimplyUpandLowBounds}
\min_{1\le i\le N}a_i< u(x)< \max_{1\le i\le N}a_i
\ee
for each $x\in \R^n\setminus\{x_1,\dots, x_N\}$.
\end{lem}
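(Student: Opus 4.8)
The plan is to handle the two assertions by different means: the non-strict bounds follow from a truncation argument exploiting the uniqueness of the minimizer, and the strict bounds follow from the strong maximum principle for $p$-harmonic functions.

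Write $m:=\min_i a_i$ and $M:=\max_i a_i$. For the non-strict bounds, I would truncate $u$ into the interval $[m,M]$ by setting $\tilde u:=\min\{M,\max\{m,u\}\}$. Since $u$ is continuous (as $p>n$) and each constraint value satisfies $m\le a_i\le M$, the truncation preserves the constraints: $\tilde u(x_i)=a_i$. Moreover $\tilde u$ is bounded, hence lies in $L^1_{\mathrm{loc}}(\R^n)$, and the chain rule for Lipschitz compositions gives $D\tilde u=Du\,\mathbf 1_{\{m<u<M\}}\in L^p(\R^n)$, so $\tilde u\in\mathcal{D}^{1,p}(\R^n)$ is an admissible competitor. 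Because $|D\tilde u|\le|Du|$ pointwise almost everywhere, $\int_{\R^n}|D\tilde u|^p\,dx\le\int_{\R^n}|Du|^p\,dx$, so $\tilde u$ is also a minimizer. As the strictly convex Dirichlet integral has a unique minimizer, $\tilde u=u$, and therefore $m\le u\le M$ on all of $\R^n$.

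For the strict bounds, suppose the $a_i$ are not all equal, so $m<M$. As observed in the proof of the preceding proposition, $u$ is $p$-harmonic on $\Omega:=\R^n\setminus\{x_1,\dots,x_N\}$, which is connected because $n\ge2$. Since $D(M-u)=-Du$ and $z\mapsto|z|^{p-2}z$ is odd, $M-u$ is a nonnegative $p$-harmonic function on $\Omega$. I claim the set $Z:=\{x\in\Omega:u(x)=M\}$ is both open and closed in $\Omega$: it is closed by continuity, and if $x_*\in Z$ then, choosing $\rho>0$ with $B_{2\rho}(x_*)\subset\Omega$, Serrin's Harnack inequality \cite{MR0170096} gives $\sup_{B_\rho(x_*)}(M-u)\le C\inf_{B_\rho(x_*)}(M-u)\le C\,(M-u)(x_*)=0$, so $B_\rho(x_*)\subset Z$. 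If $Z$ were nonempty, connectedness of $\Omega$ would force $Z=\Omega$, hence by continuity $u\equiv M$ on $\R^n$ and $a_i=M$ for all $i$, contradicting $m<M$. Thus $u<M$ on $\Omega$; applying the same argument to the nonnegative $p$-harmonic function $u-m$ yields $u>m$ on $\Omega$, which gives \eqref{SimplyUpandLowBounds}.

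I expect the strict inequalities to be the main point, specifically the passage from the local Harnack inequality to the strong maximum principle. Harnack only asserts that a nonnegative $p$-harmonic function is bounded above by a constant times its infimum over a ball; the real work is converting the vanishing of $M-u$ at a single interior point into its vanishing on an entire neighborhood, and then propagating this across the connected punctured domain via the open--closed argument above. The non-strict bounds, by contrast, are routine once the uniqueness of the minimizer is invoked.
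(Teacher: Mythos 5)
Your proof is correct and takes essentially the same route as the paper: the non-strict bounds come from the identical truncation-plus-uniqueness argument (the paper truncates one-sidedly via $v=\min\{u,M\}$ and treats the lower bound symmetrically), and the strict bounds come from the strong maximum principle for $p$-harmonic functions on the connected punctured domain. The only difference is that the paper cites the strong maximum principle directly (Corollary 2.21 of \cite{MR2242021}), whereas you rederive it from Serrin's Harnack inequality via the open--closed argument, which is a standard and valid substitute.
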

\begin{proof}
We will only establish the claimed upper bounds. Set $$M:=\max_{1\le i\le N}a_i$$ and define 
$$
v(x)=\min\{u(x),M\}, \quad x\in \R^n. 
$$
It is plain to see that $v\le M$ and that $v$ satisfies \eqref{MultiConstraint}. Moreover, 
$$
\int_{\R^n}|Dv|^pdx=\int_{u\le M}|Du|^pdx\le\int_{\R^n}|Du|^pdx.
$$
So $v\in {\cal D}^{1,p}(\R^n)$ minimizes \eqref{pDirichlet} subject to \eqref{MultiConstraint}. It follows that $u\equiv v\le M$. 

\par Observe that $u-M$ is nonpositive and $p$-harmonic in the domain $\R^n\setminus\{x_1,\dots, x_N\}$. By the strong 
maximum principle (Corollary 2.21 of \cite{MR2242021}), it is either 
that $u\equiv M$ or $u<M$ in $\R^n\setminus\{x_1,\dots, x_N\}$. Since $u$ is not constant in $\R^n$ is must be that 
$u<M$ in $\R^n\setminus\{x_1,\dots, x_N\}$.  
\end{proof}
The following corollary is now easily seen as a consquence of Propositions \ref{pHarmonicProp} and \ref{LimitPosPreserving}.
\begin{cor}
Suppose $u\in {\cal D}^{1,p}(\R^n)$ minimizes \eqref{pDirichlet} subject to the constraints \eqref{MultiConstraint}. Then the limit 
\be\label{MultiPoleLim}
\lim_{|x|\rightarrow\infty}u(x)
\ee
exists and 
\be
\lim_{|x|\rightarrow\infty}|x||Du(x)|=0.
\ee
Moreover, if not all of the $a_i$ are identical,
\be\label{MultiPoleLimIneq}
\min_{1\le i\le N}a_i<\lim_{|x|\rightarrow\infty}u(x)<\max_{1\le i\le N}a_i.
\ee
\end{cor}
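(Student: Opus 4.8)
The plan is to reduce all three assertions to the two exterior-domain results already established, by passing to an exterior domain that avoids all the poles and then rescaling to the unit ball. First I would invoke Lemma \ref{Boundedai} to record that $u$ is uniformly bounded on $\R^n$, with $\min_i a_i \le u \le \max_i a_i$. Next, setting $s := \max_{1\le i\le N}|x_i|$ and $t := s+1$, I would note that $u$ solves the multipole equation whose poles $x_1,\dots,x_N$ all lie in $\overline{B_s}\subset B_t$, so $u$ is $p$-harmonic on the exterior domain $\R^n\setminus\overline{B_t}$: away from the poles $u$ is $p$-harmonic, exactly as in the proof that the minimizer satisfies \eqref{pMultiCond}.

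Since Propositions \ref{pHarmonicProp} and \ref{LimitPosPreserving} are stated for the unit exterior domain $\R^n\setminus\overline{B_1}$, I would rescale by setting $w(y) := u(ty)$ for $|y|>1$. Then $w$ is bounded and $p$-harmonic in $\R^n\setminus\overline{B_1}$, as $p$-harmonicity and the pointwise bound are both preserved under this scaling. Applying Proposition \ref{pHarmonicProp} to $w$ gives that $\lim_{|y|\to\infty}w(y)$ exists and $\lim_{|y|\to\infty}|y||Dw(y)|=0$. Translating back through $Dw(y)=t\,Du(ty)$ and $|y||Dw(y)| = |ty|\,|Du(ty)|$, I conclude that $\lim_{|x|\to\infty}u(x)$ exists and that $\lim_{|x|\to\infty}|x||Du(x)|=0$, which are the first two claimed assertions.

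For the strict inequality \eqref{MultiPoleLimIneq}, assume not all $a_i$ coincide and write $a:=\min_i a_i$ and $b:=\max_i a_i$. By \eqref{SimplyUpandLowBounds}, $a<u(x)<b$ for every $x\in\R^n\setminus\{x_1,\dots,x_N\}$; in particular this holds for every $|x|>t$, since all the poles lie in $B_t$. Equivalently, $a<w(y)<b$ for all $|y|>1$, so Proposition \ref{LimitPosPreserving} applied to $w$ yields $a<\lim_{|y|\to\infty}w(y)<b$, which is exactly \eqref{MultiPoleLimIneq} after translating the limit back to $u$.

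The only point requiring care — and the closest thing to an obstacle in an otherwise direct argument — is the bookkeeping of the rescaling: one must check that scaling by $t$ preserves both $p$-harmonicity and the bounds, and that the factor of $t$ produced by the chain rule in $Dw(y)=t\,Du(ty)$ combines with $|y|$ to give exactly $|ty|\,|Du(ty)|$, so that the gradient-decay statement transfers with no loss. Everything else is a direct citation of the boundedness Lemma \ref{Boundedai} together with Propositions \ref{pHarmonicProp} and \ref{LimitPosPreserving}.
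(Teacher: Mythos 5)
Your proof is correct and takes essentially the same route as the paper, which simply observes that the corollary follows from Lemma \ref{Boundedai} together with Propositions \ref{pHarmonicProp} and \ref{LimitPosPreserving}; your explicit rescaling to the unit exterior domain is exactly the routine bookkeeping the paper leaves implicit.
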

\begin{rem}
Using the estimate from Theorem \ref{DecayThm}, we can also conclude that the limit \eqref{MultiPoleLim}
occurs with at least an algebraic rate of convergence. 
\end{rem}

We can also make a few basic observations about a particular level set of solutions of equation 
\eqref{pMultiPole}.
\begin{cor}
Suppose $u\in {\cal D}^{1,p}(\R^n)$ minimizes \eqref{pDirichlet} subject to the constraints \eqref{MultiConstraint} and
$$
\lim_{|x|\rightarrow\infty}u(x)=\beta.
$$
Then 
$$
\{x\in \R^n: u(x)=\beta\}
$$
is nonempty and noncompact. Furthermore, $c=\beta$ is the only value for which the level set 
$$
\{x\in \R^n: u(x)=c\}
$$
has this property. 
\end{cor}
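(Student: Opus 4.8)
The plan is to establish the three assertions—nonemptiness, noncompactness, and uniqueness of the value $\beta$—in turn, using only the continuity of $u$, the existence of the limit $\beta$, and the comparison results recorded in Corollary \ref{WeakCompCor}. I first dispose of the degenerate case in which all the $a_i$ coincide: then $u$ is the constant $\beta$, its sole nonempty level set is $\{u=\beta\}=\R^n$, which is noncompact, so the statement is immediate. Henceforth I assume the $a_i$ are not all equal, so that \eqref{MultiPoleLimIneq} yields $\min_i a_i<\beta<\max_i a_i$. Nonemptiness is then an application of the intermediate value theorem: since $u$ is continuous (it lies in $W^{1,p}_{\mathrm{loc}}$ with $p>n$) and $\R^n$ is connected, the range of $u$ is an interval containing the attained endpoints $\min_i a_i$ and $\max_i a_i$; as $\beta$ lies strictly between them, $\beta$ is attained and $\{u=\beta\}\neq\emptyset$.

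The heart of the matter, and the step I expect to be the main obstacle, is noncompactness. I would argue by contradiction. Suppose $\{u=\beta\}$ is compact, hence bounded, and fix $R$ large enough that $\overline{B_R}$ contains both $\{u=\beta\}$ and every $x_i$. Then $u$ is bounded and $p$-harmonic on the exterior domain $\R^n\setminus\overline{B_R}$, and $u-\beta$ never vanishes on this set; since $n\ge 2$ the set $\R^n\setminus\overline{B_R}$ is connected, so $u-\beta$ has a constant sign there. Assume $u>\beta$ on $\R^n\setminus\overline{B_R}$, the opposite case being symmetric. Rescaling by $w(y):=u(Ry)$ to reduce to the exterior of the unit ball, Corollary \ref{WeakCompCor} shows that $m(r):=\min_{|x|=r}u$ is nondecreasing for $r>R$; on the other hand $m(r)\to\beta$ because $u(x)\to\beta$ as $|x|\to\infty$, which forces $m(r)\le\beta$ for every $r>R$. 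But $u>\beta$ everywhere outside $\overline{B_R}$ and the minimum over each sphere is attained, so $m(r)>\beta$---a contradiction. The case $u<\beta$ is identical with $M(r):=\max_{|x|=r}u$, which is nonincreasing and tends to $\beta$ from above. Hence $\{u=\beta\}$ is noncompact.

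For uniqueness I would show that every level set $\{u=c\}$ with $c\neq\beta$ is compact, so no value other than $\beta$ can produce a nonempty and noncompact level set. Given such a $c$, the limit $u(x)\to\beta$ provides an $R$ with $|u(x)-\beta|<\tfrac12|c-\beta|$ whenever $|x|>R$; then $|u(x)-c|\ge|c-\beta|-|u(x)-\beta|>0$ there, so $u\neq c$ outside $\overline{B_R}$ and $\{u=c\}\subset\overline{B_R}$. As $\{u=c\}$ is also closed, it is compact. Combined with the first two parts, this shows that $\{u=c\}$ is nonempty and noncompact precisely when $c=\beta$, completing the proof.
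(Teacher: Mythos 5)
Your proof is correct, and its overall skeleton---nonemptiness via the intermediate value theorem, noncompactness by contradiction from the constant sign of $u-\beta$ on a connected exterior domain, and uniqueness by trapping $\{u=c\}$ in a ball---matches the paper's. The one genuine difference is the mechanism for the noncompactness contradiction. The paper invokes Proposition \ref{LimitPosPreserving}: if $u>\beta$ outside $\overline{B_R}$, then $u-\beta$ is a bounded, positive $p$-harmonic function on an exterior domain, so its limit at infinity is \emph{strictly} positive, contradicting $u\to\beta$. You instead use the weak comparison principle of Corollary \ref{WeakCompCor} to get that $m(r)=\min_{|x|=r}u$ is nondecreasing; since $m(r)\to\beta$ this forces $m(r)\le\beta$, against $m(r)>\beta$. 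Both lemmas are established in the paper and both deductions are sound (your rescaling to the unit-ball setting is the right way to apply the corollary, and your separate treatment of the degenerate case where all $a_i$ coincide, which the paper elides, is harmless). The trade-off is minor: the paper's route leans on the explicit barrier construction behind Proposition \ref{LimitPosPreserving}, while yours leans on the global comparison principle for bounded $p$-harmonic functions on exterior domains; your version is arguably slightly more self-contained within Section 4, but neither is more elementary in any essential sense.
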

\begin{proof}
We have established  
$$
\beta\in \left[\min_{1\le i\le N}a_i,\max_{1\le i\le N}a_i\right]=u(\R^n).
$$
Since $u$ is continuous, there is some $z\in \R^n$ for which $u(z)=\beta$.  Consequently, $\{x\in \R^n: u(x)=\beta\}\neq\emptyset$.  

\par If $\{x\in \R^n: u(x)=\beta\}\subset B_R$ for some $R>0$, then either $u> \beta$ in $\R^n\setminus{\overline{B_R}}$ or  $u< \beta$ in $\R^n\setminus{\overline{B_R}}$. If $u> \beta$ in $\R^n\setminus{\overline{B_R}}$, then $u-\beta$ is a bounded and positive $p$-harmonic function 
on an exterior domain. By Proposition  \ref{LimitPosPreserving}, there is a $\eta>0$ such that  $u(x)-\beta\rightarrow \eta$ as $|x|\rightarrow\infty$. However, this contradicts $u(x)\rightarrow \beta$ as $|x|\rightarrow\infty$. Thus, no such $R$ exists and $\{x\in \R^n: u(x)=\beta\}$ 
is noncompact. 

\par Finally, we note that if there is a sequence $(x_k)_{k\in \N}$ with $|x_k|\rightarrow\infty$ and $u(x_k)=c$ then 
$$
\beta=\lim_{k\rightarrow\infty}u(x_k)=c.
$$
That is, $\{x\in \R^n: u(x)=c\}$ is compact when $c\neq \beta$. 
\end{proof}

\begin{rem} It would be really interesting to explicitly compute 
$$
\lim_{|x|\rightarrow\infty}u(x)
$$
for solutions of the multipole PDE \eqref{pMultiPole}. Perhaps it is possible to do so in terms of the given data $a_1,\dots, a_N$ and 
$x_1,\dots, x_N$. Recall that when $N=1$,
$$
\lim_{|x|\rightarrow\infty}u(x)=a_1
$$
by Corollary \ref{LiouvilleProp}; and when $N=2$,
$$
\lim_{|x|\rightarrow\infty}u(x)=\frac{a_1+a_2}{2}
$$
by Theorem \ref{mainThm}. We wonder if there are analogous formulae for $N\ge 3$.
\end{rem}

\begin{figure}[h]
\centering
 \includegraphics[width=1\columnwidth]{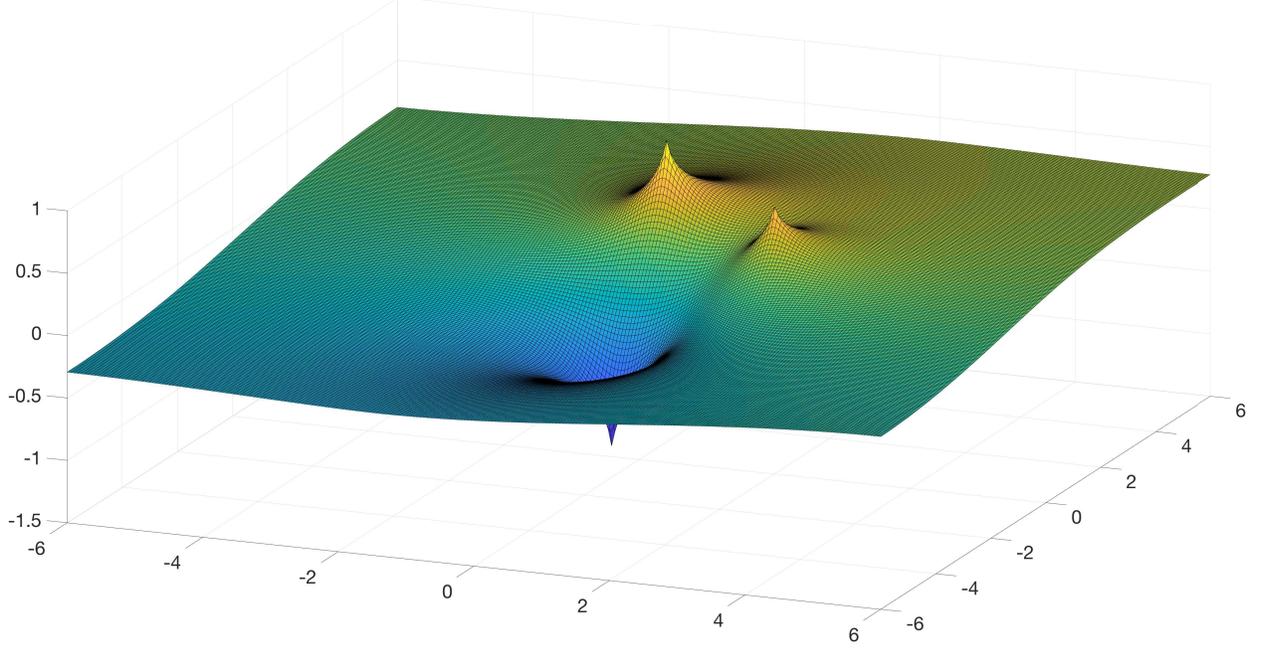}
 \caption{The graph of the solution of the multipole equation \eqref{pMultiPole} with $n=2, p=3, x_1=(0,1), x_2=(0,-1), x_3= (2,0)$ and $c_1=1, c_2=-3/2$ and $c_3=1/2$.}\label{Tripole}
\end{figure}

\par We conclude by studying the (non)differentiability of minimizers at the points $x_1,\dots, x_N$. This and the other properties we have already discussed about solutions of the multipole PDE may be 
seen in Figures \ref{Tripole} and \ref{Quadpole}.

\begin{figure}[h]
\centering
 \includegraphics[width=1\columnwidth]{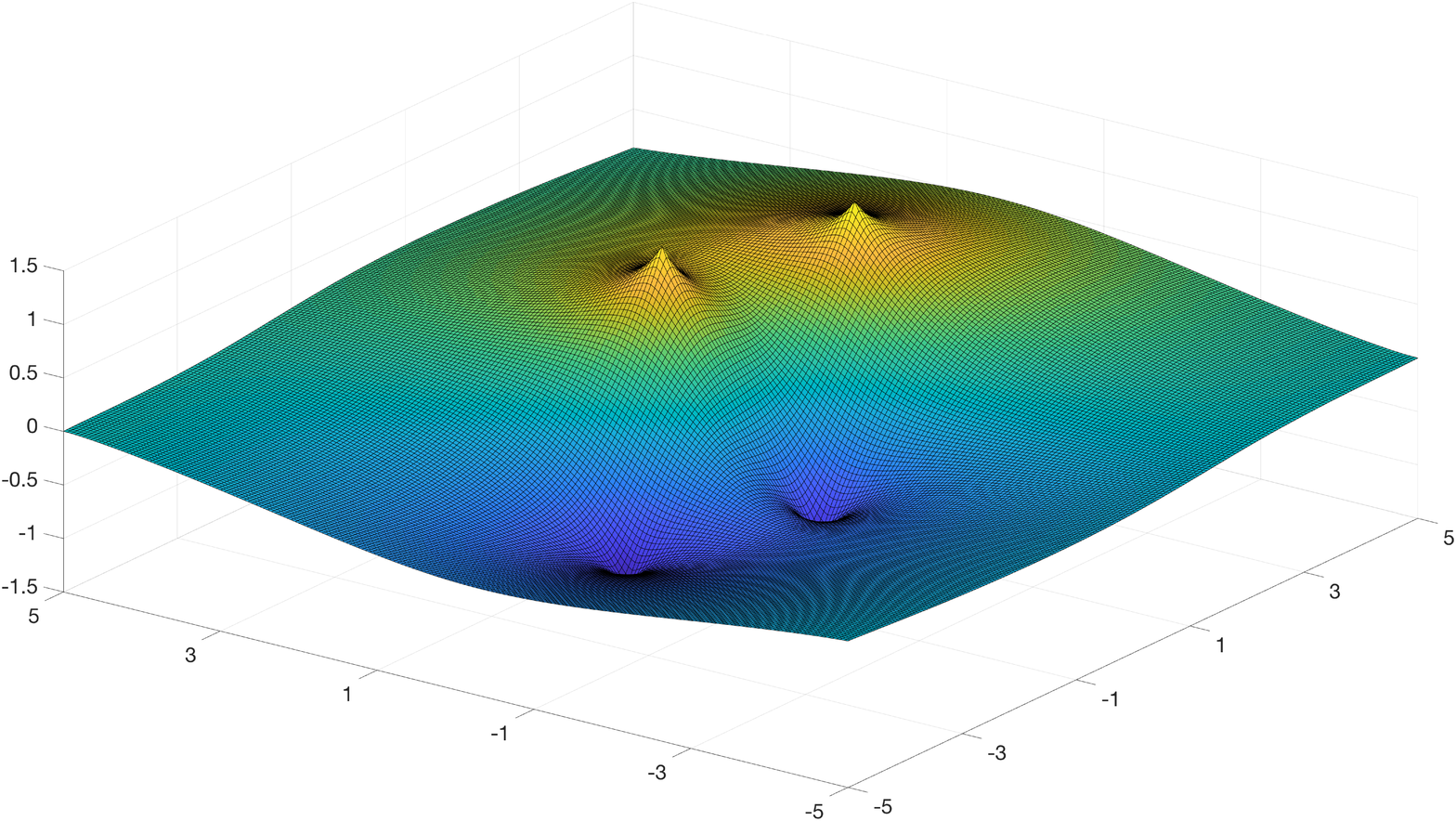}
 \caption{The graph of a solution of the multipole equation \eqref{pMultiPole} with $n=2, p=5, x_1=(0,1), x_2=(0,-1), x_3= (2,0),x_4= (-2,0)$ and $c_1=2, c_2=-2, c_3=1$ and $c_4=-1$ }\label{Quadpole}
\end{figure}


\begin{prop}\label{NonDiffProp}
Suppose $u\in {\cal D}^{1,p}(\R^n)$ minimizes \eqref{pDirichlet} subject to the constraints \eqref{MultiConstraint} and $i\in \{1,\dots, N\}$. If $u$ has a strict local maximum or minimum at $x_i$, then $u$ is not differentiable at $x_i$.
\end{prop}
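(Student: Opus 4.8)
The plan is to argue by contradiction, reducing to the case of a strict local maximum and then extracting a contradiction from the behaviour of $u$ near an isolated point of the measure $\sum_i c_i\delta_{x_i}$. First I would reduce to the maximum case: if $u$ minimizes \eqref{pDirichlet} subject to $u(x_j)=a_j$, then $-u$ minimizes the same integral subject to $(-u)(x_j)=-a_j$, and $-u$ has a strict local minimum at $x_i$ exactly when $u$ has a strict local maximum there, while differentiability is unaffected. So assume $u$ has a strict local maximum at $x_i$ and suppose, toward a contradiction, that $u$ is differentiable at $x_i$. Since $x_i$ is an interior local extremum of a differentiable function, $Du(x_i)=0$, and differentiability then yields
$$u(x)-u(x_i)=o(|x-x_i|), \qquad x\to x_i.$$
In particular the difference quotient $|u(x)-u(x_i)|/|x-x_i|$ stays bounded near $x_i$; this is the statement I will contradict.

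Next I would show $c_i\neq 0$. Choose $\rho>0$ so small that $\overline{B_\rho(x_i)}$ contains none of the other points $x_j$ and that $u(x)<u(x_i)$ for $0<|x-x_i|\le\rho$. On $B_\rho(x_i)$ the measure $\sum_j c_j\delta_{x_j}$ restricts to $c_i\delta_{x_i}$, so $-\Delta_p u=c_i\delta_{x_i}$ there. If $c_i=0$, then $u$ is $p$-harmonic throughout $B_\rho(x_i)$ and attains an interior maximum at $x_i$; by the strong maximum principle (Corollary 2.21 of \cite{MR2242021}) $u$ would be constant on $B_\rho(x_i)$, contradicting the strictness of the maximum. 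Hence $c_i\neq 0$. Note that it is the \emph{strict} extremum, not merely differentiability, that forces this: a vanishing $c_i$ makes $u$ smoothly $p$-harmonic (hence differentiable) near $x_i$, so the extremum is precisely what rules this case out.

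Finally I would invoke the Kichenassamy--V\'eron asymptotics. Since $u\in\mathcal{D}^{1,p}(\R^n)$ is bounded near $x_i$ by Lemma \ref{Boundedai} and has $Du\in L^p$, Theorem 1.1 and Remark 1.6 of \cite{KicVer} apply and furnish a constant $\mu$ with $c_i=n\omega_n|\mu|^{p-2}\mu$ such that, writing $\gamma:=\frac{p-n}{p-1}\in(0,1)$,
$$u(x)-u(x_i)=\mu\,|x-x_i|^{\gamma}\bigl(1+o(1)\bigr),\qquad x\to x_i,$$
the function-value companion of the gradient limit recorded in \eqref{vinftyEquation} (one may instead integrate that gradient asymptotic along radial segments, using that $Du$ is asymptotically radial at the singularity). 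As $c_i\neq 0$ we have $\mu\neq 0$, and $\gamma-1=\frac{1-n}{p-1}<0$ for $n\ge 2$, so
$$\frac{|u(x)-u(x_i)|}{|x-x_i|}=|\mu|\,|x-x_i|^{\gamma-1}\bigl(1+o(1)\bigr)\longrightarrow\infty$$
as $x\to x_i$. This contradicts the bound $|u(x)-u(x_i)|=o(|x-x_i|)$ coming from differentiability, so $u$ is not differentiable at $x_i$.

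The main obstacle is this last step: one must contradict the \emph{growth of $u(x)-u(x_i)$}, not the blow-up of $|Du|$, because differentiability at a single point imposes no control on the gradient in a punctured neighbourhood (a function can be differentiable at a point while its gradient is unbounded nearby). Thus the crux is to have the function-value form of the singular profile available, either by citing the full Kichenassamy--V\'eron expansion or by a careful radial integration of the quoted gradient estimate.
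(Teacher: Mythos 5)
Your argument is correct in outline, but it takes a genuinely different route from the paper. The paper's proof is barrier-based and entirely elementary: it compares $u$ on a punctured ball $B_r(x_1)\setminus\{x_1\}$ with the explicit radial $p$-harmonic function
$v(x)=\bigl(u(x_1)-\max_{\partial B_r(x_1)}u\bigr)\bigl(1-|x-x_1|^{\frac{p-n}{p-1}}/r^{\frac{p-n}{p-1}}\bigr)+\max_{\partial B_r(x_1)}u$,
which matches $u$ at the center and dominates it on $\partial B_r(x_1)$; the comparison principle gives $u\le v$, and plugging in the first-order Taylor expansion of a putative differentiable $u$ at $x_1$ and letting $x\to x_1$ forces $u(x_1)\le\max_{\partial B_r(x_1)}u$, contradicting strictness. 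This needs neither the value of $c_i$, nor the strong maximum principle, nor any singular expansion. Your route instead goes through the Kichenassamy--V\'eron asymptotics: you first show $c_i\ne 0$ via the strong maximum principle, then read off $u(x)-u(x_i)=\mu|x-x_i|^{\gamma}(1+o(1))$ with $\mu\ne0$ and $\gamma=\frac{p-n}{p-1}<1$, which is incompatible with $u(x)-u(x_i)=o(|x-x_i|)$. What your approach buys is more information: the exact singular profile, the precise rate of non-differentiability, and the fact that $c_i\ne0$ at any strict local extremum. What it costs is a heavier citation: you correctly identify that the gradient-modulus limit actually quoted in the paper (cf.\ \eqref{vinftyEquation} and the line after it) only yields, upon radial integration, the \emph{upper} bound $|u(x)-u(x_i)|\le C|x-x_i|^{\gamma}$, which is perfectly consistent with differentiability; the contradiction requires the function-value (or directional-gradient) form of the expansion from Theorem 1.1 and Remark 1.6 of \cite{KicVer}. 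That form is available there, so your proof closes, but the paper's barrier argument sidesteps the issue entirely --- it manufactures the needed one-sided lower bound on $u(x_1)-u(x)$ by comparison rather than by asymptotic analysis, which is why it does not even need to know whether $c_i$ vanishes.
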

\begin{proof}
We will prove that $u$ is not differentiable at $x_1$ provided that it has a strict local max at $x_1$. With this assumption, there is some $r>0$ such that $u(x)<u(x_1)$ for $x\in \overline{B_r(x_1)}\setminus\{x_1\}$. 
In particular,
\be\label{strictLocalMaxIneq}
u(x_1)>\max_{\partial B_r(x_1)}u.
\ee
Choosing $r$ smaller if necessary, we may also suppose that $u$ is $p$-harmonic in $B_r(x_1)\setminus\{x_1\}$. 

\par Set 
$$
v(x):=\left(u(x_1)-\max_{\partial B_r(x_1)}u\right)\left(1-\frac{|x-x_1|^{\frac{p-n}{p-1}}}{r^{\frac{p-n}{p-1}}}\right)+\max_{\partial B_r(x_1)}u
$$
for $x\in \overline{B_r(x_1)}$. Note that $v(x_1)=u(x_1)$ and 
$$
v|_{\partial B_r(x_1)}=\max_{\partial B_r(x_1)}u\ge u|_{\partial B_r(x_1)}.
$$
As $v$ is $p$-harmonic in $B_r(x_1)\setminus\{x_1\}$, comparison gives $v\ge u$ in $B_r(x_1)$. 

\par If $u$ is differentiable at $x_1$,  then
\begin{align}
v(x)&=\left(u(x_1)-\max_{\partial B_r(x_1)}u\right)\left(1-\frac{|x-x_1|^{\frac{p-n}{p-1}}}{r^{\frac{p-n}{p-1}}}\right)+\max_{\partial B_r(x_1)}u\\
&\ge u(x)\\
&=u(x_1)+Du(x_1)\cdot (x-x_1)+o(|x-x_1|)\\
&\ge u(x_1)-\left(|Du(x_1)|+o(1)\right)|x-x_1|
\end{align}
as $x\rightarrow x_1$.  Rearranging this inequality gives
$$
\left(|Du(x_1)|+o(1)\right)|x-x_1|^{1-\left(\frac{p-n}{p-1}\right)}\ge \frac{1}{r^{\frac{p-n}{p-1}}}\left(u(x_1)-\max_{\partial B_r(x_1)}u\right).
$$
And sending $x\rightarrow x_1$ leads us to 
$$
0\ge u(x_1)-\max_{\partial B_r(x_1)}u,
$$
which contradicts \eqref{strictLocalMaxIneq}. Consequently, $u$ is not differentiable at $x_1$. 
\end{proof}

\begin{cor}
Suppose $u\in {\cal D}^{1,p}(\R^n)$ minimizes \eqref{pDirichlet} subject to the constraints \eqref{MultiConstraint} and that $a_1,\dots, a_N$ are not all the same. Then $u$ is not differentiable at any point in which it attains its global maximum or its global minimum. 
\end{cor}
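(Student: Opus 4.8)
The plan is to reduce this to Proposition~\ref{NonDiffProp} by showing that any point at which $u$ attains a global extremum is in fact a \emph{strict} local extremum occurring at one of the constraint points $x_1,\dots,x_N$. The crucial input is the strict two-sided bound from Lemma~\ref{Boundedai}.

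First I would record that, since the values $a_1,\dots,a_N$ are not all equal, Lemma~\ref{Boundedai} gives
$$
\min_{1\le i\le N}a_i< u(x)< \max_{1\le i\le N}a_i, \quad x\in \R^n\setminus\{x_1,\dots,x_N\}.
$$
In particular, the global maximum value $M:=\max_i a_i$ and the global minimum value $m:=\min_i a_i$ of $u$ can be attained \emph{only} at points of the finite set $\{x_1,\dots,x_N\}$, because at every other point $u$ lies strictly between $m$ and $M$.

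Next, suppose $u$ attains its global maximum at some $z\in\R^n$, so $u(z)=M$. By the previous paragraph, $z=x_k$ for some index $k$ with $a_k=M$. I would then verify that $x_k$ is a strict local maximum: because $x_1,\dots,x_N$ are distinct and finite in number, there is $r>0$ so small that $\overline{B_r(x_k)}$ contains none of the other points $x_j$. For every $x\in \overline{B_r(x_k)}\setminus\{x_k\}$ we then have $x\notin\{x_1,\dots,x_N\}$, so the strict upper bound yields $u(x)<M=u(x_k)$. Thus $u(x)<u(x_k)$ throughout a punctured neighborhood of $x_k$, which is precisely the hypothesis of Proposition~\ref{NonDiffProp}. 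That proposition then gives that $u$ is not differentiable at $x_k=z$. The argument for a global minimum is identical, with $M$ and the upper bound replaced by $m$ and the lower bound.

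There is no genuine obstacle here: the full content of the corollary is already packaged into the two cited results, and the only point requiring a moment's care is the passage from a \emph{global} extremum to a \emph{strict local} one. This step relies on isolating $x_k$ from the remaining (finitely many) constraint points, so that the strict inequality of Lemma~\ref{Boundedai} is available on an entire punctured neighborhood of $x_k$ and Proposition~\ref{NonDiffProp} can be applied verbatim.
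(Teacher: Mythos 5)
Your proposal is correct and follows essentially the same route as the paper: invoke the strict bounds \eqref{SimplyUpandLowBounds} from Lemma \ref{Boundedai} to locate every global extremum at some $x_k$ and to see that it is a strict local extremum, then apply Proposition \ref{NonDiffProp}. Your explicit choice of a radius $r$ isolating $x_k$ from the other constraint points just fills in a small step the paper leaves implicit.
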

\begin{proof}
Suppose 
$$
a_1=\max_{1\le i\le N}a_i.
$$
We noted that $u(x)<u(x_1)=a_1$ in $\R^n\setminus\{x_1,\dots, x_N\}$ in \eqref{SimplyUpandLowBounds}. It follows that $u$ has a strict local max at $x_1$. 
By Proposition \ref{NonDiffProp}, $u$ isn't differentiable at $x_1$. As a result, $u$ is not differentiable at any point in which it attains its global maximum. We can argue similarly for points at which $u$ attains its global minimum. 
\end{proof}

\appendix

\section{Numerical method}
We will now discuss the method used to approximate the solution of PDE \eqref{DipolePDE} displayed in Figure \ref{DipoleFig}. 
It turns out that this method also can be adapted to obtain approximations of solutions of the multipole equation \eqref{pMultiPole}, as exhibited in 
Figures \ref{Tripole} and \ref{Quadpole}.  For simplicity, we will focus on the particular case of 
approximating a solution $u$ of the PDE
\be\label{SpecficDipole}
-\Delta_pu=\delta_{(0,1)}-\delta_{(0,-1)}
\ee
in $\R^2$. We will also change notation and use ordered pairs $(x,y)$ to denote points in $\R^2$ so that $u=u(x,y)$. 

\par Observe that any solution $u\in {\cal D}^{1,p}(\R^2)$ of \eqref{SpecficDipole} minimizes 
\be\label{originalE}
\iint_{\R^2}\frac{1}{p}|Dv|^pdxdy- (v(0,1)-v(0,-1))
\ee
among all $v\in {\cal D}^{1,p}(\R^2)$. For a given $\ell\in \N$, we may also consider the problem of minimizing 
\be\label{TruncatedEnergy}
\int^\ell_{-\ell}\int^\ell_{-\ell}\frac{1}{p}|Dv|^pdxdy- (v(0,1)-v(0,-1))
\ee
amongst $v\in W^{1,p}([-\ell,\ell]^2)$. It is not hard to show this problem has a minimizer $u_\ell\in W^{1,p}([-\ell,\ell]^2)$. Further, 
it is routine to check that $u_\ell(x,y)-u_\ell(0,0)$ converges to $u(x,y)$ for each $(x,y)\in \R^2$ as $\ell\rightarrow\infty$, where $u$ is the unique minimizer of \eqref{originalE} with $u(0,0)=0$.   Consequently, we will focus on approximating $u_\ell$. 

Below we will show how to derive a discrete version of our minimization problem for $u_\ell$. Then we will explain how to use an 
iteration scheme based on a quasi-Newton method.  Again we emphasize that all of the figures in this article 
were based on this method or minor variants to account for differences in the particular examples we considered.

\subsection{Discrete energy}
Let us fix $\ell\in \N$ ($\ell\ge 2$) and discretize the interval $ [-\ell,\ell]$ along the $x$-axis with 
$$
x_i=-\ell+(i-1)h
$$
for $i=1,\dots, M$. Here
$$
h=\frac{2\ell}{M-1},
$$
and we note that each of the subintervals $[x_1,x_{2}],\dots, [x_{M-1},x_{M}] $ has length $h$. We can do the same for the interval $[-\ell,\ell]$ along the $y$-axis and obtain $$y_j=-\ell+(j-1)h$$ for $j=1,\dots, M$.  
Our goal is to derive an appropriate energy specified for functions defined on the grid points $(x_i,y_j)$.

\par To this end, we observe that if $v:[-\ell,\ell]^2\rightarrow \R$ is sufficiently smooth 
\begin{align*}
&\int^\ell_{-\ell}\int^\ell_{-\ell}|Dv|^pdxdy\\
&\hspace{.5in}\approx\sum^{M-1}_{i,j=1}|Dv(x_i,y_j)|^ph^2\\
&\hspace{.5in}=\sum^{M-1}_{i,j=1}\left(v_{x}(x_i,y_j)^2+v_{y}(x_i,y_j)\right)^{p/2}h^2\\
&\hspace{.5in}\approx\sum^{M-1}_{i,j=1}\left( \left(\frac{v(x_i+h,y_j)-v(x_i,y_i)}{h}\right)^2+\left(\frac{v(x_i,y_j+h)-v(x_i,y_i)}{h}\right)^2\right)^{p/2}h^2\\
&\hspace{.5in}=\sum^{M-1}_{i,j=1}\left( \left(\frac{v(x_{i+1},y_j)-v(x_i,y_i)}{h}\right)^2+\left(\frac{v(x_i,y_{j+1})-v(x_i,y_i)}{h}\right)^2\right)^{p/2}h^2\\
&\hspace{.5in}= h^{2-p}\sum^{M-1}_{i,j=1}\left( \left(v(x_{i+1},y_j)-v(x_i,y_i)\right)^2+\left(v(x_i,y_{j+1})-v(x_i,y_i\right)^2\right)^{p/2}.
\end{align*} 
We also suppose $h=1/k$ for some $k\in \N$ which gives
$$
M=2\ell k+1
$$
and
$$
(x_{\ell k+1}, y_{(\ell+1) k+1})=(0,1)\quad \text{and}\quad (x_{\ell k+1}, y_{(\ell-1) k+1})=(0,-1).
$$

As a result, we will attempt to minimize
\be\label{DiscreteE}
E(v)=\frac{1}{p}k^{p-2}\sum^{M-1}_{i,j=1}\left( \left(v_{i+1,j}-v_{i,j}\right)^2+\left(v_{i,j+1}-v_{i,j}\right)^2\right)^{p/2}
-(v_{\ell k+1,(\ell +1)k+1}-v_{\ell k+1,(\ell -1)k+1})
\ee
over the $M^2-1$ variables 
$$
v=
\left(\begin{array}{ccccc}
v_{1,1} & v_{1,2}&\dots& v_{1,M-1}& v_{1,M}\\
v_{2,1} & v_{2,2}&\dots& v_{2,M-1}& v_{2,M}\\
\vdots & \vdots &  \ddots & \vdots & \vdots \\
v_{M-1,1} & v_{M-1,2} & \dots & v_{M-1,M-1} &v_{M-1,M} \\
v_{M,1} & v_{M,2} & \dots & v_{M,M-1} &
\end{array}\right).
$$
A minimizer $v=(v_{ij})$  for $E$ would 
then form an approximation for $u_\ell$ on the grid points $(x_i,y_j)$
$$
u_\ell(x_i,y_j)\approx v_{ij}.
$$

\subsection{Quasi-Newton method}
We used a multidimensional version of the secant method to approximate minimizers of the discrete energy $E$ defined above in \eqref{DiscreteE}. In particular, since 
$E$ is convex we only need to approximate a $v=(v_{ij})$ such that 
$$
\partial_{v_{ij}} E(v)=0
$$
for each $i,j=1,\dots, M$ with $(i,j)\neq (M,M)$.

\par First we chose the initial guesses
$$
v^0_{ij}=0
$$
and 
$$
v^1_{ij}=g(x_i,y_j).
$$
Here
$$
g(x,y)=-\frac{1}{4\pi}\log\left[\frac{x^2+(y-1)^2+10^{-2}}{x^2+(y+1)^2+10^{-2}}\right]
$$
is approximately equal to 
$$
g_0(x,y)=-\frac{1}{4\pi}\log\left[\frac{x^2+(y-1)^2}{x^2+(y+1)^2}\right],
$$
which is a solution of the Dipole equation $-\Delta g_0=\delta_{(0,1)}-\delta_{(0,-1)}$ in $\R^2$. 

\par Then we performed the iteration
\be
\begin{cases}
\displaystyle v^{m+1}_{ij}=v^{m}_{ij}-\tau_m\partial_{v_{ij}} E(v^m)
\\\\
\tau_m:=\frac{\displaystyle\sum_{ij}(v^m_{ij}-v^{m-1}_{ij}) (\partial_{v_{ij}} E(v^m)-\partial_{v_{ij}} E(v^{m-1}))}{\displaystyle
\sum_{ij}\left(\partial_{v_{ij}} E(v^m)-\partial_{v_{ij}} E(v^{m-1})\right)^2}
\end{cases}
\ee
for $m=1,2,3,\dots$ until the stopping criterion 
$$
\max_{ij}\left|\partial_{v_{ij}} E(v^m)\right|<10^{-6}
$$
was achieved.  The iteration was computed for all $i,j=1,\dots, M$ except for $(i,j)\neq (M,M)$.

\bibliography{MorreyAsymBib}{}

\bibliographystyle{plain}

\typeout{get arXiv to do 4 passes: Label(s) may have changed. Rerun}

\end{document}